\newif\ifjoc\jocfalse
\newtheorem{theorem}{Theorem}[section]
\newtheorem{lemma}[theorem]{Lemma}
\newtheorem{corollary}[theorem]{Corollary}
\newtheorem{question}[theorem]{Question}
\newtheorem{observation}[theorem]{Observation}
\author{Csaba Bir\'o}
\address{Department of Mathematics, University of Louisville, Louisville KY 40292, USA}
\email{csaba.biro@louisville.edu}
\author{Beth Novick}
\address{School of Mathematical and Statistical Sciences, Clemson University, Clemson SC 29634, USA}
\email{nbeth@clemson.edu}
\author{Daniela Olejnikova}
\address{School of Mathematical and Statistical Sciences, Clemson University, Clemson SC 29634, USA}
\curraddr{Public Health Scotland, Gyle Square, 1 South Gyle Crescent, Edinburgh, EH12 9EB, United Kingdom}
\email{daniela.olejnikova@phs.scot}
\begin{document}

\ifjoc\begin{frontmatter}\fi

\title{Metric dimension of growing infinite graphs}

\ifjoc
\begin{aug}

\author{\fnms{Csaba} \snm{Bir\'o}\ead[label=e1]{csaba.biro@louisville.edu}}
\address{Department of Mathematics, University of Louisville, Louisville KY 40292, USA\\\printead{e1}}

\author{\fnms{Beth} \snm{Novick}\ead[label=e2]{nbeth@clemson.edu}}
\address{School of Mathematical and Statistical Sciences, Clemson University, Clemson SC 29634, USA\\\printead{e2}}

\author{\fnms{Daniela} \snm{Olejnikova}\ead[label=e3]{daniela.olejnikova@phs.scot}}
\address{School of Mathematical and Statistical Sciences, Clemson University, Clemson SC 29634, USA\\Current address: Public Health Scotland, Gyle Square, 1 South Gyle Crescent, Edinburgh, EH12 9EB, United Kingdom\\\printead{e3}}

\end{aug}
\fi


\begin{abstract}
We investigate how the metric dimension of infinite graphs change when we add edges to the graph. Our two main results: (1) there exists a growing sequence of graphs (under the subgraph relation, but without adding vertices) for which the metric dimension changes between finite and infinite infinitely many times; (2) finite changes in the edge set can not change the metric dimension from finite to infinite or vice versa.
\end{abstract}

\ifjoc
\begin{keyword}[class=AMS]
\kwd[Primary ]{05C63}
\kwd[; secondary ]{05C69}
\end{keyword}

\begin{keyword}
\kwd{Metric dimension}
\kwd{Infinite graph}
\end{keyword}


\end{frontmatter}
\else
\maketitle
\fi


\section{Introduction}

In this paper we consider connected graphs,  both finite and  infinite.   For a graph $G$ and $u,v\in V(G)$,  we define the distance between $u$ and $v$, denoted $d(u,v)$,  to be the number of edges in a shortest $u$--$v$ path.   A vertex $w\in V(G)$ \emph{resolves} $u$ and $v$ if $d(u,w) \not=d(v,w)$.    If $W=(w_1, \ldots,w_k)$ is an ordered set of vertices in $V(G)$,  we define the \emph{metric representation} or \emph{metric code} of $v$ with respect to $W$ to be 
$$r(v\vert W) = (d(v,w_1), \ldots,  d(v,w_k)).$$
A set $W$ is a \emph{resolving set},  if all vertices in $V(G)$ have distinct metric representations with respect to $W$.  The minimum cardinality of a resolving set is the \emph{metric dimension} of $G$,  denoted $\beta(G)$. Note that the metric dimension is a nonnegative integer for finite graphs,  but may be infinite for infinite graphs. 

The concept of metric representation was introduced by Slater \cite{Slater75} in 1975 in the context of a location problem:  The location of an intruder  is modeled by its metric representation, which Slater called its \emph{locating set}.  The minimum number of sensors to uniquely determine the location of the intruder in the network is then the metric dimension,  which Slater termed the \emph{location number}.  Harary and Melter  \cite{HM} independently introduced  \emph{metric dimension} in 1976.  Many applications of this concept,  including robot navigation \cite{KRR},  sonar \cite{Slater75},  chemistry \cite{CEJO},  network discovery \cite{Z},  and finding person zero at the beginning of an epidemic \cite{MOT}, 
have helped motivate extensive research.  See \cite{CZ} for an early survey.  

The problem of determining $\beta(G)$  is, in general, NP-hard \cite{KRR}.  Much is known for specific graphs.  For example, the only finite  graphs with metric dimension equal to $1$ are paths \cite{CEJO},  and this generalizes to infinite rays \cite{CHMPP}.  Efficient methods for finding the metric dimension of a tree were described  independently by various authors \cite{CEJO, HM, KRR, Slater75}.  C\'{a}ceres et al.\ \cite{CHMPP} studied the metric dimension of the Cartesian product of graphs.

Several variations of metric dimension have been studied.   Motivated by the observation that non-isomorphic graphs on a fixed vertex set can have identical metric codes for each vertex, Seb\H{o} and Tannier \cite{ST} introduced \emph{strong metric dimension}:  a vertex $w$ \emph{strongly} resolves $u$ and $v$ if some shortest $u$-$w$ path contains $v$, or if some shortest $v$-$w$ path contains $u$.  Sensors measuring  \emph{truncated metric dimension}   \cite{BR-V,TFL} detect only relatively `close'  objects.   
Mol,  Murphy and Oellermann \cite{MMO} considered the question of how much the metric dimension of a graph can be reduced by adding edges,  defining the \emph{threshold dimension} of a graph $G$ to be the minimum of $\beta(H)$ over all graphs $H$ on $V(G)$ for which $G$ is a subgraph.  The analogous concept for strong metric dimension is introduced by Benakli et al.\ \cite{BBDNO}.   Metric dimension of random graphs was studied by Bollob\'as, Mische, and Pra\l at \cite{BMP}.

The following theorem was proved by Chartrand, Poisson, and Zhang \cite{CPZ} for finite graphs, and by C\'aceres et al.\ \cite{CHMPP} for infinite graphs.

\begin{theorem}\label{thm:3k-1}
If $\beta(G)=k$, then $G$ has maximum degree at most $3^k-1$.
\end{theorem}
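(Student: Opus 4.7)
The plan is a short pigeonhole argument using the triangle inequality. Fix a resolving set $W=(w_1,\ldots,w_k)$ of minimum size $k$ and let $v\in V(G)$ be an arbitrary vertex. I want to bound the degree of $v$ by bounding the number of possible metric codes $r(u|W)$ that a neighbor $u$ of $v$ can have, and then use the fact that $W$ resolves $G$ to turn that count into a bound on $|N(v)|$.

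The key observation is that if $u$ is adjacent to $v$, then for each $j\in\{1,\ldots,k\}$ the triangle inequality applied to $u,v,w_j$ gives $|d(u,w_j)-d(v,w_j)|\le 1$. Thus each coordinate of $r(u|W)$ differs from the corresponding coordinate of $r(v|W)$ by at most $1$, leaving at most three possibilities per coordinate and so at most $3^k$ possibilities in total for $r(u|W)$. Because $W$ is a resolving set, all vertices (in particular, all neighbors of $v$ together with $v$ itself) have pairwise distinct codes, so the code $r(v|W)$ cannot be the code of any neighbor. Subtracting this one forbidden pattern from the $3^k$ allowed patterns yields at most $3^k-1$ distinct codes available to neighbors of $v$, hence $|N(v)|\le 3^k-1$. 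Since $v$ was arbitrary, $\Delta(G)\le 3^k-1$.

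I do not expect any serious obstacle. The only point requiring a brief comment is the infinite case: the triangle inequality and the distinctness of codes work identically for infinite connected graphs, and the counting step is valid because $k$ is assumed finite, so the cardinalities $3^k$ and $3^k-1$ are actual integer bounds. No appeal to finiteness of $V(G)$ is needed anywhere in the argument.
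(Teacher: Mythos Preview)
Your argument is correct and is exactly the standard pigeonhole proof of this result. Note, however, that the paper does not supply its own proof of Theorem~\ref{thm:3k-1}; it merely quotes the statement and attributes it to Chartrand, Poisson, and Zhang (finite case) and to C\'aceres et al.\ (infinite case), so there is nothing in the paper to compare against beyond observing that your proof is the one found in those cited sources.
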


Recall that a graph $G$ is \emph{locally finite}, if every vertex in $G$ has finite degree.
Later in the paper we will use the following important corollary of Theorem~\ref{thm:3k-1}.

\begin{corollary}\label{cor:lf}
If $\beta(G)<\infty$, then $G$ is locally finite.
\end{corollary}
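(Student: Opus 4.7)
The plan is to observe that this corollary is essentially an immediate consequence of Theorem~\ref{thm:3k-1}, with no additional machinery required. If $\beta(G)<\infty$, then we may set $k=\beta(G)$, which is a nonnegative integer. Applying Theorem~\ref{thm:3k-1} directly tells us that every vertex of $G$ has degree at most $3^k-1$, a finite quantity. In particular, each individual vertex has finite degree, which is precisely the definition of $G$ being locally finite.

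Since the deduction is a single invocation of the previous theorem, there is no real obstacle to overcome; the only thing to be careful about is making sure the conclusion of Theorem~\ref{thm:3k-1} is read as a pointwise bound on each vertex's degree (which is the meaningful reading for infinite graphs, where the supremum of degrees is the relevant notion of maximum degree). One could equivalently argue by contrapositive: if $G$ is not locally finite, then some vertex has infinite degree, so the degree bound $3^k-1$ fails for every finite $k$, forcing $\beta(G)=\infty$. Either phrasing yields a two-line proof.
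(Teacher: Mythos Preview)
Your proposal is correct and matches the paper's treatment: the corollary is stated immediately after Theorem~\ref{thm:3k-1} with no separate proof, since it follows at once from the degree bound $3^k-1$. Your one-line deduction (and the contrapositive rephrasing) is exactly the intended reading.
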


The issue of how  metric dimension changes with the addition of edges is an intriguing one.   Some graph parameters,  such as chromatic number and clique number are non-decreasing.  Others,  such as independence number,  diameter, and matching number are non-increasing.   The invariant $\beta(G)$,  on the other hand can increase,  decrease,  or remain the same as edges are added.  In 2015,  Eroh et al.\ \cite{EFKY} assert that the addition of one edge to a finite graph can raise the metric dimension by an arbitrary amount,  specifically it can be raised by approximately order $n$,  where $n = |V(G)|$.  In the same paper,  these authors show that the removal of a single edge can increase the metric dimension by at most $2$.    Later, in 2021, Mashkaria et al.\ \cite{MOT} give a class of examples showing that in fact the metric dimension of a finite graph can increase by an exponential amount upon the addition of an edge.  

We say that a graph parameter is \emph{stable} if there is a universal positive integer $k$ such that the addition (or
removal) of a single edge in an arbitrary graph changes the parameter by at most $k$.   Otherwise that parameter is \emph{unstable}.   In this setting we see that the results mentioned above \cite{EFKY, MOT} establish that metric dimension for finite graphs is unstable.

Our focus in this paper is to study the behavior of infinite graphs as edges are added.   In Section 2 we show that with the right choice of sequences of edge sets added,  the metric dimension will change from finite,  to infinite,  and back,  an infinite number of times.   In Section 3 we show that this behavior is only possible if each set in the sequence is infinite -- said differently,  adding (or deleting) a finite number of edges cannot change the metric dimension from finite to infinite or vice versa.   Next we note that our result in Section 3 is non-trivial because for infinite graphs, as well,  metric dimension is unstable; furthermore, we propose a new question and prove a partial result.

\section{Growing sequence with infinitely many changes}\label{sec:growing}

Throughout this paper, we use $\mathbb{N}$ to denote the set of nonnegetive integers.

\subsection{Definition of the graph sequence}

Let $V=\{v_{ij}:i\in\mathbb{N},j\in\{0,1\}\}$. We will define the edge sets $E_0, E_0', E_1,E_1'\ldots$ such that
\[
E_0\subseteq E_0'\subseteq E_1\subseteq E_1'\subseteq\cdots
\]
and the corresponding growing sequences of graphs $G_i=(V,E_i)$, and $G_i'=(V,E_i')$ for $i\in\mathbb{N}$.

The definition of the edge sets is as follows.
\begin{align*}
E_i&=\{(v_{ab},v_{cd}):|a-c|\leq i\}\\
E_i'&=E_i\cup\{(v_{ab},v_{cd}):|a-c|=i+1,\text{ and }b\neq d\}\\
\end{align*}

The graph $G_0$ is the infinite matching and it is disconnected. The graph $G_0'$ is the so-called ladder, and $\beta(G_0')=2$. See Figure~\ref{fig:threegraphs} for $G_1$, $G_1'$, and $G_2$.

\tikzset{inner sep=0mm, minimum size=2mm,every node/.style=draw,circle,on grid}

\tikzset{comb/.pic={
\node (1) {};
\node (1u) [below=of 1] {} edge (1);
\node (2) [right=of 1] {} edge (1); 
\node (2u) [below=of 2] {} edge (2);
\node (3) [right=of 2] {} edge (2);
\node (3u) [below=of 3] {} edge (3);
\node (4) [right=of 3] {} edge (3);
\node (4u) [below=of 4] {} edge (4);
\node (5) [right=of 4,draw=none] {} edge (4);
\node (5u) [below=of 5,draw=none] {};
\node [draw=none,right=of 5,yshift=-0.5cm] {$\cdots$};
}}

\tikzset{ladder/.pic={
\pic{comb};
\graph{ (1u) -- (2u) -- (3u) -- (4u) -- (5u) };
}}

\tikzset{crossed/.pic={
\pic{ladder};
\graph{ (1) -- (2u) -- (3) -- (4u) -- (5) };
\graph{ (1u) -- (2) -- (3u) -- (4) -- (5u) };
}}

\tikzset{G1p/.pic={
\pic{crossed};
\graph{ (1) -- (3u) };
\graph{ (1u) -- (3) };
\graph{ (2) -- (4u) };
\graph{ (2u) -- (4) };
\graph{ (3) -- (5u) };
\graph{ (3u) -- (5) };
}}

\tikzset{G2/.pic={
\pic{G1p};
\graph{ (1) -- [bend left] (3) };
\graph{ (1u) -- [bend right] (3u) };
\graph{ (2) -- [bend left] (4) };
\graph{ (2u) -- [bend right] (4u) };
\graph{ (3) -- [bend left] (5) };
\graph{ (3u) -- [bend right] (5u) };
}}

\begin{figure}
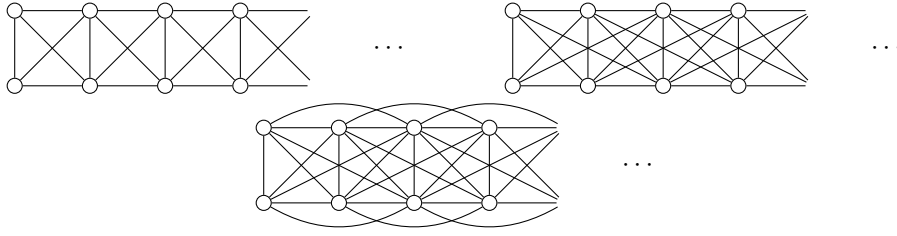

\tikz\pic{crossed};\hskip 0.5in\tikz\pic{G1p};

\tikz\pic{G2};
\caption{The graphs $G_1$, $G_1'$, and $G_2$}\label{fig:threegraphs}
\end{figure}

\begin{theorem}
For all $i\geq 1$,
\begin{enumerate}[a)]
\item $\beta(G_i)=\infty$
\item $\beta(G_i')\leq 2i+1$ 
\end{enumerate}
\end{theorem}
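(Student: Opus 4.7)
\textit{Part (a).} The plan is to compute distances in $G_i$ and exploit a symmetry. Since $v_{ab}$ and $v_{cd}$ are adjacent in $G_i$ whenever $|a-c|\le i$ and $(a,b)\ne(c,d)$, and each edge changes the first coordinate by at most $i$, one checks that $d_{G_i}(v_{ab},v_{cd})=\lceil |a-c|/i\rceil$ when $a\ne c$, and equals $1$ when $a=c$ and $b\ne d$. Crucially, this distance depends only on $|a-c|$, so any $w=v_{ef}$ with $e\ne a$ satisfies $d(w,v_{a0})=d(w,v_{a1})$. Hence no vertex outside $\{v_{a0},v_{a1}\}$ resolves this pair, forcing every resolving set to contain at least one of $v_{a0}$ or $v_{a1}$ for each $a\in\mathbb{N}$; thus $\beta(G_i)=\infty$.

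\textit{Part (b).} The plan is to produce an explicit resolving set of size $2i+1$ in $G_i'$, guided by the distance formula
\[
d_{G_i'}(v_{ab},v_{cd})=\lceil k/(i+1)\rceil+\varepsilon,\qquad k:=|a-c|,
\]
where $\varepsilon=1$ precisely when $k\equiv 0\pmod{i+1}$ and $\lceil k/(i+1)\rceil\not\equiv b\oplus d\pmod 2$, and $\varepsilon=0$ otherwise. The lower bound $\lceil k/(i+1)\rceil$ is immediate since no edge of $G_i'$ changes the first coordinate by more than $i+1$. The matching upper bound comes from a greedy path of crossing edges (each of stride $i+1$ and flipping the row) possibly followed by a short non-crossing tail; the $+1$ correction is forced exactly when $k$ is a multiple of $i+1$, since then every edge of a tight path must be a crossing and the parity of row-flips is determined.

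With the formula in hand, I propose
\[
W=\{v_{j0},v_{j1}:0\le j\le i-1\}\cup\{v_{i0}\},
\]
which has $2i+1$ vertices whose first coordinates form a complete residue system modulo $i+1$. For any target $v_{ab}$, at least one $j\in\{0,1,\ldots,i\}$ satisfies $|j-a|\equiv 0\pmod{i+1}$, activating the parity-sensitive $\varepsilon$-term. Including both $v_{j0}$ and $v_{j1}$ for $j\le i-1$ lets $b$ be read from the distance difference at any such $j<i$; the lone $v_{i0}$ handles the remaining residue $a\equiv i\pmod{i+1}$. The base quantities $\lceil |j-a|/(i+1)\rceil$ across $j$ then pinpoint $a$. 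The main obstacle will be a careful case analysis confirming that no two distinct vertices share a code, together with separately treating the boundary regime of small $a$ where some $|j-a|=0$, in which the same-column direct edge deviates from the generic distance formula.
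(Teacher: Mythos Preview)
Your approach is essentially the same as the paper's: Part (a) is argued identically, and for Part (b) you select the very same resolving set $W=\{v_{ab}:a+b\le i\}$ and your unified distance formula is exactly the paper's $\alpha_i$/$\beta_i$ (same-row/different-row) packaged together. The paper also proceeds by writing out the metric codes and doing a case analysis.

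Where your proposal is thin is the verification that $W$ resolves. Your heuristic ``the base quantities $\lceil |j-a|/(i+1)\rceil$ across $j$ pinpoint $a$, and the parity-sensitive $\varepsilon$ at the special residue reads off $b$'' is the right intuition but not yet an argument, because the code entries are not the bare ceilings: one coordinate per row carries the $\varepsilon$-correction, and you do not know in advance which one without already knowing $a$ (and the sign of the correction also depends on $b$). So ``read $b$, then read $a$'' and ``read $a$ from the base'' are each slightly circular as stated. The paper resolves this by first proving a monotonicity lemma (both $\alpha_i$ and $\beta_i$, and the cross-comparisons $\alpha_i(k)\le\beta_i(k+1)$, $\beta_i(k)\le\alpha_i(k+1)$, are nondecreasing in $k$), which reduces the comparison of $v_{ab}$ and $v_{cd}$ to the case $|a-c|\le 1$, and then dispatches the three residual cases with short lemmas about when $\alpha_i$ and $\beta_i$ can be simultaneously constant or equal over a window of length $i{+}1$. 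If you flesh out your ``careful case analysis'' along these lines, your proposal becomes the paper's proof; as written, the key step is only promised, not carried out. (Minor note: your worry about a boundary anomaly at $|j-a|=0$ is unnecessary---your formula already gives $0$ and $1$ correctly there.)
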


\begin{proof}
Let $i\geq 1$ and suppose that $\beta(G_i)<\infty$. Let $W$ be a finite resolving set. Let $k=\max\{a:v_{ab}\in W\}+1$. Then $v_{k,0}$ and $v_{k,1}$ are not distinguished by $W$. Indeed, if $v_{ab}\in W$, then $d(v_{ab},v_{k,0})=d(v_{ab},v_{k,1})=\left\lceil\frac{k-a}{i}\right\rceil$. So $W$ is not a resolving set, a contradiction.

To see the second part, let $W=\{v_{ab}:a+b\leq i\}$, and fix the ordering on $W$ as $(v_{0,0},v_{1,0},\ldots,v_{i,0},v_{0,1},v_{1,1},\ldots,v_{i-1,1})$. Then $|W|=2i+1$. We will show that $W$ is a resolving set of $G_i'$.

To see this, we introduce two functions $\alpha_i(k)$ and $\beta_i(k)$ to measure the distance of vertices in $G_i'$. Let
\begin{align*}
\alpha_i(k)&=d(v_{ab},v_{cd})\text{ if }|a-c|=k\text{ and }b=d\\
\beta_i(k)&=d(v_{ab},v_{cd})\text{ if }|a-c|=k\text{ and }b\neq d.\\
\end{align*}
For example, the sequence $\{\alpha_2(k)\}_{k=0}^\infty$ starts with 0, 1, 1, 2, 2, 2, 2, 3, 3, 4, 4, 4, 4, 5,\ldots, and $\{\beta_2(k)\}_{k=0}^\infty$ starts with 1, 1, 1, 1, 2, 2, 3, 3, 3, 3, 4, 4, 5, 5,\ldots

In general,
\begin{align*}
\alpha_i(k)&=\begin{cases}
\left\lceil\frac{k}{i+1}\right\rceil+1 & \text{\rule[-1em]{0pt}{1em}if $\frac{k}{i+1}$ is an odd integer}\\
\left\lceil\frac{k}{i+1}\right\rceil & \text{otherwise,}
\end{cases}\\
\beta_i(k)&=\begin{cases}
\left\lceil\frac{k}{i+1}\right\rceil+1 & \text{\rule[-1em]{0pt}{1em}if $\frac{k}{i+1}$ is an even integer}\\
\left\lceil\frac{k}{i+1}\right\rceil & \text{otherwise.}
\end{cases}
\end{align*}

\begin{lemma}\label{l:monotone}
For all $k\in\mathbb{N}$, $\alpha_i(k)\leq \alpha_i(k+1)$, $\beta_i(k)\leq\beta_i(k+1)$. Less obviously $\alpha_i(k)\leq\beta_i(k+1)$ and $\beta_i(k)\leq\alpha_i(k+1)$.
\end{lemma}

\begin{proof}
When $k/(i+1)$ is not an integer, the assertions are clear. If $k/(i+1)$ is an odd integer, then $\alpha_i(k)=\lceil k/(i+1)\rceil+1=\lceil (k+1)/(i+1)\rceil=\beta_i(k+1)$; the other assertions are trivial. The case when $k/(i+1)$ is an even integer is similar.
\end{proof}

\begin{lemma}\label{l:opposite}
For all $k\in\mathbb{N}$, we have $\alpha_i(k+j)\neq\beta_i(k+j)$ for some $0\leq j\leq i$.
\end{lemma}

\begin{proof}
Let $0\leq j\leq i$ be such that $(k+j)/(i+1)$ is an integer. Then it is an even integer or an odd integer. Either way
\[
\alpha_i\left(\frac{k+j}{i+1}\right)\neq\beta_i\left(\frac{k+j}{i+1}\right).\qedhere
\]
\end{proof}

\begin{lemma}\label{l:sameside}
If $\alpha_i(k)=\alpha_i(k+1)=\cdots=\alpha_i(k+i+1)$, then $\beta_i(k+i)<\beta_i(k+i+1)$. Symmetrically, if $\beta_i(k)=\beta_i(k+1)=\cdots=\beta_i(k+i+1)$, then $\alpha_i(k+i)<\alpha_i(k+i+1)$.
\end{lemma}

\begin{proof}
We will show the first assertion; the proof of the second one is similar. Suppose $\alpha_i(k)=\cdots=\alpha_i(k+i+1)$. If $(k+j)/(i+1)$ is an integer for some $1\leq j\leq i$, then either $\alpha_i(k+j-1)<\alpha_i(k+j)$, or $\alpha_i(k+j)<\alpha_i(k+j+1)$, depending on the parity of $(k+j)/(i+1)$. Either way, it would contradict the condition. Otherwise, $k/(i+1)$ and $(k+i+1)/(i+1)$ are both integers. Since $\alpha_i(k)=\alpha_i(k+i+1)$, we must have that $k/(i+1)$ is an odd integer, so $(k+i+1)/(i+1)$ is an even integer, and $(k+i)/(i+1)$ is not an integer. Hence
\[
\beta_i(k+i)=
\left\lceil\frac{k+i}{i+1}\right\rceil=
\left\lceil\frac{k+i+1}{i+1}\right\rceil<
\left\lceil\frac{k+i+1}{i+1}\right\rceil+1=
\beta_i(k+i+1).\qedhere
\]
\end{proof}

\begin{lemma}\label{l:diagonal}
For all $k\in\mathbb{N}$, we have $\alpha_i(k)<\beta_i(k+i+1)$ and $\beta_i(k)<\alpha_i(k+i+1)$.
\end{lemma}

\begin{proof}
For the first assertion, suppose $\alpha_i(k)=\beta_i(k+i+1)$. Then
\[
\left\lceil\frac{k}{i+1}\right\rceil+y=
\left\lceil\frac{k+i+1}{i+1}\right\rceil+z,
\]
where $y,z\in\{0,1\}$ with $y=1$ iff $k/(i+1)$ is an odd integer, and $z=1$ iff $(k+i+1)/(i+1)$ is an even integer. Since $\left\lceil\frac{k}{i+1}\right\rceil+1=
\left\lceil\frac{k+i+1}{i+1}\right\rceil$, we have $y=1$, and $z=0$. So $k/(i+1)$ is an odd integer. But then $(k+i+1)/(i+1)$ is an even integer, contradicting $z=0$.

The other assertion is similar.
\end{proof}

We are ready to finish the proof of $\beta(G_i')\leq 2i+1$ by showing that $W$ is a resolving set. To see this, let $v_{ab}, v_{cd}\in V$; we will show that the distance vectors of $v_{ab}$ and $v_{cd}$ are distinct. Without loss of generality, assume $c\geq a$. If $c>a$, then, by Lemma~\ref{l:monotone},
\[
r(v_{ab}|W)\leq r(v_{a+1,b}|W)\leq r(v_{cd}|W)
\]
So if $r(v_{ab}|W)=r(v_{cd}|W)$, then $r(v_{ab}|W)=r(v_{a+1,b}|W)$.

This shows that we may assume $a\leq c\leq a+1$. Up to symmetry, there are three different cases.

\emph{Case 1: $c=a$, $b=0$, $d=1$}

The distance vectors are
\begin{center}
\begin{tabular}{>{$}c<{$}>{$}c<{$}>{$}c<{$}>{$}c<{$}>{$}c<{$}>{$}c<{$}>{$}c<{$}>{$}c<{$}}
(\alpha_i(a) & \alpha_i(a-1) & \ldots & \alpha_i(a-i) & \beta_i(a) & \beta_i(a-1) & \ldots & \beta_i(a-i+1))\\
(\beta_i(a) & \beta_i(a-1) & \ldots & \beta_i(a-i) & \alpha_i(a) & \alpha_i(a-1) & \ldots & \alpha_i(a-i+1))\\
\end{tabular}
\end{center}
Due to Lemma~\ref{l:opposite}, there is a distinct coordinate somewhere among the first $i+1$.

\emph{Case 2: $c=a+1$, $b=d$}

We may assume $b=d=0$. The case $b=d=1$ is symmetric by swapping $\alpha_i$ and $\beta_i$ in the argument.

The distance vectors are
\begin{center}
\begin{tabular}{>{$}c<{$}>{$}c<{$}>{$}c<{$}>{$}c<{$}>{$}c<{$}>{$}c<{$}>{$}c<{$}>{$}c<{$}}
(\alpha_i(a) & \alpha_i(a-1) & \ldots & \alpha_i(a-i) & \beta_i(a) & \beta_i(a-1) & \ldots & \beta_i(a-i+1))\\
(\alpha_i(a+1) & \alpha_i(a) & \ldots & \alpha_i(a-i+1) & \beta_i(a+1) & \beta_i(a) & \ldots & \beta_i(a-i+2))\\
\end{tabular}
\end{center}
If the two metric codes are equal, then $\alpha_i(a-i)=\alpha_i(a-i+1)=\cdots=\alpha_i(a+1)$, so by Lemma~\ref{l:sameside}, we conclude $\beta_i(a)<\beta_i(a+1)$, a contradiction.

\emph{Case 3: $c=a+1$, $b\neq d$}

Again, by the symmetry of $\alpha_i$ and $\beta_i$, we may assume that $b=0$ and $d=1$.

The distance vectors are
\begin{center}
\begin{tabular}{>{$}c<{$}>{$}c<{$}>{$}c<{$}>{$}c<{$}>{$}c<{$}>{$}c<{$}>{$}c<{$}>{$}c<{$}}
(\alpha_i(a) & \alpha_i(a-1) & \ldots & \alpha_i(a-i) & \beta_i(a) & \beta_i(a-1) & \ldots & \beta_i(a-i+1))\\
(\beta_i(a+1) & \beta_i(a) & \ldots & \beta_i(a-i+1) & \alpha_i(a+1) & \alpha_i(a) & \ldots & \alpha_i(a-i+2))\\
\end{tabular}
\end{center}
Suppose the metric codes are equal. If $i$ is even, then $\alpha_i(a-i)=\beta_i(a-i+1)=\alpha_i(a-i+2)=\beta_i(a-i+3)=\cdots=\alpha_i(a)=\beta_i(a+1)$, which contradicts Lemma~\ref{l:diagonal}. If $i$ is odd, then $\alpha_i(a-i)=\beta_i(a-i+1)=\alpha_i(a-i+2)=\beta_i(a-i+3)=\cdots=\alpha_i(a-1)=\beta_i(a)=\alpha_i(a+1)$. So $\alpha_i(a-i)=\alpha_i(a)=\alpha_i(a+1)$. But also, $\alpha_i(a)=\beta_i(a+1)$, so $\alpha_i(a-i)=\beta_i(a+1)$, again, contradicting Lemma~\ref{l:diagonal}.
\end{proof}

\section{Finite changes in the edge set} \label{sec:stable}

In Section~\ref{sec:growing} we have shown that it is possible to change between finite and infinite metric dimension infinitely many times with the addition of edges. Notice, that in the example we provided, an infinite number of edges were added in every step. It is natural to ask if the same can be achieved with the addition of a finite number of edges.

The following two theorems and their corollary answer this question in the negative.

\begin{theorem}\label{thm:addition}
Let $G$ be a graph, and let $G'$ be a graph constructed from $G$ by adding an edge between vertices $u$ and $v$. Let $W$ be a resolving set of $G$, and let
\[
W'=W\cup\bigcup_{w\in W}\{x\in V(G):d_G(w,x)\in I(d_G(w,u),d_G(w,v))\},
\]
where $I(a,b)$ represents the closed interval of integers between the integers $a$ and $b$, regardless of which one is greater.

Then $W'$ is a resolving set of $G'$.
\end{theorem}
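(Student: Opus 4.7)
I would proceed by contradiction. Suppose there exist distinct $x, y \in V(G')$ with $d_{G'}(x, w') = d_{G'}(y, w')$ for every $w' \in W'$. Since $W \subseteq W'$ and $W$ resolves $G$, pick $w \in W$ with $d_G(w, x) \neq d_G(w, y)$. Write $a = d_G(w, u)$, $b = d_G(w, v)$, and exploit the symmetry of the $W'$-construction in $u$ and $v$ to assume $a \leq b$. Because shortest paths in $G'$ are simple and so use the new edge at most once, for any $p, q \in V$ one has the key identity
\[
d_{G'}(p, q) = \min\bigl(d_G(p, q),\ d_G(p, u) + 1 + d_G(v, q),\ d_G(p, v) + 1 + d_G(u, q)\bigr).
\]

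Since $d_{G'} \leq d_G$ but $d_{G'}(w, x) = d_{G'}(w, y)$ while $d_G(w, x) \neq d_G(w, y)$, at least one of the optimal $G'$-paths from $w$ must actually use the new edge $uv$. After relabeling, assume the $w$-$y$ path does so in the direction $u \to v$, so that $d_{G'}(w, y) = a + 1 + d_G(v, y) < d_G(w, y)$; combining this strict inequality with $d_G(w, y) \leq b + d_G(v, y)$ forces $b \geq a + 2$. Observe next that $u, v \in W'$ since $a, b \in [a, b]$; moreover, along any shortest $w$-$v$ geodesic $w = w_0, w_1, \ldots, w_b = v$ in $G$, each intermediate vertex $w_k$ with $k \in [a, b]$ satisfies $d_G(w, w_k) = k$ and hence lies in $W'$. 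These \emph{sphere vertices} form the arsenal for distinguishing $x$ from $y$.

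I would first apply the key identity to $p \in \{u, v\}$ and $q \in \{x, y\}$, and equate the hypothesis $d_{G'}(u, x) = d_{G'}(u, y)$ and $d_{G'}(v, x) = d_{G'}(v, y)$. A small case analysis (comparing which of the two terms $d_G(u, \cdot), 1 + d_G(v, \cdot)$ achieves the minimum in each expression) should yield the relation $d_G(v, x) = d_G(v, y)$, together with the lower bound $d_G(u, x) \geq 1 + d_G(v, y)$. Next, evaluating the key identity at each sphere vertex $w_k$ and using the equality $d_{G'}(w_k, x) = d_{G'}(w_k, y)$, while exploiting the known values $d_G(w, w_k) = k$ and $d_G(v, w_k) = b - k$, should propagate the derived relations into an equality $d_G(w, x) = d_G(w, y)$, contradicting the choice of $w$.

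The main obstacle lies in this propagation step: $x$ and $y$ may each independently benefit from the shortcut when viewed from different sphere vertices $w_k$, so determining which of the three terms governs $d_{G'}(w_k, x)$ versus $d_{G'}(w_k, y)$ requires a careful case split at each $k$. I expect the cleanest route is to focus on the extremal choices $k = a$ (so $w_k$ is at the same level as $u$ but on a $w$-$v$ geodesic) and $k = b$ (so $w_k = v$), where triangle-inequality bounds force certain terms to dominate, and then use the already-established $d_G(v, x) = d_G(v, y)$ to cancel these shared terms, leaving the bare equality $d_G(w, x) = d_G(w, y)$.
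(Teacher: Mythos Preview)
Your opening is sound: the three-term identity for $d_{G'}$ is correct, the observation that $u,v\in W'$ is exactly right, and the case analysis you sketch from $d_{G'}(u,x)=d_{G'}(u,y)$ and $d_{G'}(v,x)=d_{G'}(v,y)$ together with $r\ge s+1$ does rigorously yield $d_G(v,x)=d_G(v,y)$ and $d_G(u,x)\ge 1+d_G(v,y)$. So far so good.

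The genuine gap is the propagation step. Your sphere vertices $w_k$ lie on a fixed $w$--$v$ geodesic, so you control $d_G(w,w_k)=k$ and $d_G(w_k,v)=b-k$, but you have no handle at all on $d_G(w_k,x)$, $d_G(w_k,y)$, or even $d_G(w_k,u)$. When you expand $d_{G'}(w_k,x)=d_{G'}(w_k,y)$ via the three-term formula, those unknown quantities appear in every branch of the minimum, and there is no mechanism to cancel them or to chain the equalities across different $k$ into the desired $d_G(w,x)=d_G(w,y)$. In particular, the extremal choice $k=a$ gives a vertex that is only at the same $w$-level as $u$; nothing forces $w_a$ to be near $u$, $x$, or $y$, so $d_{G'}(w_a,x)=d_{G'}(w_a,y)$ carries no usable information. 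The step you flag as ``the main obstacle'' is not just delicate---it does not go through with this choice of auxiliary vertices.

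The idea you are missing, and which the paper supplies, is to place the auxiliary vertex on a $w$--$x$ geodesic in $G$ rather than on a $w$--$v$ geodesic. Concretely: if $d_G(w,x)\ge d_G(w,v)$, pick $w'$ on a $G$-geodesic from $w$ to $x$ with $d_G(w,w')=d_G(w,v)$; then $w'\in W'$, and the crucial gain is that $d_G(w',x)=d_G(w,x)-d_G(w,v)$ is now known exactly. From this one shows that no $w'$--$y$ geodesic in $G'$ can pass through $v$ (roughly because such a path would have length at least $d_G(v,y)$, which one bounds below by $d_G(w',x)$ using your relation $d_G(v,x)=d_G(v,y)$), hence $d_{G'}(w',y)=d_G(w',y)$, and a short inequality chain forces $d_G(w,y)\le d_G(w,x)$. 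If instead $d_G(w,x)<d_G(w,v)$, one checks $d_G(w,u)\le d_G(w,x)$ and concludes $x\in W'$ directly. Anchoring $w'$ to $x$ is what makes the geometry computable; anchoring to $v$ does not.
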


\begin{theorem}\label{thm:removal}
Let $G$ be a graph, and let $G'$ be a graph constructed from $G$ by removing an edge between vertices $u$ and $v$. Let $W$ be a resolving set of $G$, and let
\[
W'=W\cup\{u,v\}
\]
Then $W'$ is a resolving set of $G'$.
\end{theorem}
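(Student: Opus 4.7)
The plan is a proof by contradiction resting on a single identity that relates $d_G$ to $d_{G'}$. Suppose $W'=W\cup\{u,v\}$ fails to resolve $G'$; then there exist distinct $x,y\in V$ with $d_{G'}(w',x)=d_{G'}(w',y)$ for every $w'\in W'$. In particular
\[
d_{G'}(u,x)=d_{G'}(u,y),\quad d_{G'}(v,x)=d_{G'}(v,y),\quad\text{and}\quad d_{G'}(w,x)=d_{G'}(w,y)\text{ for every }w\in W.
\]
I will show from these that $d_G(w,x)=d_G(w,y)$ for every $w\in W$, which contradicts the hypothesis that $W$ resolves $G$.

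The central identity is
\[
d_G(a,b)=\min\bigl\{d_{G'}(a,b),\ d_{G'}(a,u)+1+d_{G'}(v,b),\ d_{G'}(a,v)+1+d_{G'}(u,b)\bigr\}
\]
valid for all $a,b\in V$. The inequality $\leq$ is immediate, since each right-hand expression is the length of an $a$-$b$ walk in $G$ (a $G'$-path, or a $G'$-path from $a$ to $u$ concatenated with the edge $uv$ and a $G'$-path from $v$ to $b$, or the symmetric concatenation through $v$ first). For the reverse inequality, fix a shortest $a$-$b$ path $P$ in $G$; being simple, it contains the edge $uv$ at most once. If $P$ avoids $uv$, then $P$ is a path in $G'$ and $|P|\geq d_{G'}(a,b)$. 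If $P$ traverses $uv$ in the direction $u\to v$, then deleting that edge splits $P$ into two (necessarily $G'$-)subpaths, one from $a$ to $u$ and one from $v$ to $b$, so $|P|\geq d_{G'}(a,u)+1+d_{G'}(v,b)$; the direction $v\to u$ is symmetric.

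To finish, fix $w\in W$ and apply the identity with $(a,b)=(w,x)$ and $(a,b)=(w,y)$. Each of the three expressions inside the minimum depends on $b$ only through $d_{G'}(w,b)$, $d_{G'}(u,b)$, and $d_{G'}(v,b)$, and by the three hypotheses displayed above each such expression takes the same value at $b=x$ as at $b=y$. Consequently the two minima agree, so $d_G(w,x)=d_G(w,y)$ for every $w\in W$, yielding the contradiction.

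The only delicate point is the appeal to simplicity of shortest paths, which is what guarantees that $P$ uses the edge $uv$ at most once and therefore splits into two $G'$-subpaths when that edge is removed; once the identity is in hand, everything else collapses into a one-line comparison of two minima.
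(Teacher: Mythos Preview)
Your proof is correct. The identity
\[
d_G(a,b)=\min\bigl\{d_{G'}(a,b),\ d_{G'}(a,u)+1+d_{G'}(v,b),\ d_{G'}(a,v)+1+d_{G'}(u,b)\bigr\}
\]
is sound (shortest paths are simple, so the edge $uv$ is used at most once), and once it is in hand the contradiction is immediate: the three quantities $d_{G'}(w,\cdot)$, $d_{G'}(u,\cdot)$, $d_{G'}(v,\cdot)$ determine $d_G(w,\cdot)$, so if $x$ and $y$ agree on all of them they agree on $d_G(w,\cdot)$ for every $w\in W$.

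The paper's argument is quite different in flavor. Rather than passing through a global distance formula, it picks a single $w\in W$ that resolves $x,y$ in $G$ but not in $G'$, fixes geodesics in $G$ from $w$ to $x$ and from $w$ to $y$ sharing an initial segment, locates the removed edge $uv$ on the concatenated path, and then in two short cases shows that the endpoint of $uv$ farther from $w$ resolves $x$ and $y$ in $G'$. Your route is cleaner and entirely case-free, and the identity you isolate is a reusable fact; the paper's route is more geometric and, as a byproduct, actually pinpoints which of $u,v$ does the resolving. Both avoid any finiteness assumption on $G$.
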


We note that Eroh et al.\ \cite{EFKY} proved a result similar to Theorem~\ref{thm:removal} in the context of finite graphs.   Their proof method is not much different from ours.   Furthermore,  we note that their proof works in the infinite case as well.

\begin{corollary}\label{cor:stable}
Let $G$ be a graph and let $G'$ be constructed from $G$ by changing (adding or removing) finitely many edges. Then the metric dimension of $G$ is finite if and only if the metric dimension of $G'$ is finite.
\end{corollary}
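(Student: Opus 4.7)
The plan is to reduce the statement to a single-edge modification by induction, and then apply Theorems~\ref{thm:addition} and \ref{thm:removal}, taking care that the resolving sets they produce remain finite.

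First I would set up an induction on the number of edge changes. The base case is trivial, and in the inductive step it suffices to prove the following single-edge claim: if $H$ is any graph with $\beta(H)<\infty$ and $H^{\ast}$ differs from $H$ by exactly one edge, then $\beta(H^{\ast})<\infty$. The biconditional in the corollary then follows, because adding and removing are inverse operations: if $G'$ is built from $G$ by a finite sequence of edge modifications, then $G$ is built from $G'$ by the reverse sequence of (inverse) modifications, so both directions of the biconditional reduce to the single-edge claim iterated finitely many times.

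For the single-edge case itself, if $H^{\ast}=H-e$, Theorem~\ref{thm:removal} immediately produces a resolving set of $H^{\ast}$ of size at most $\beta(H)+2$. If $H^{\ast}=H+e$ with new edge $uv$, Theorem~\ref{thm:addition} gives the candidate resolving set
\[
W'=W\cup\bigcup_{w\in W}\{x\in V(H):d_H(w,x)\in I(d_H(w,u),d_H(w,v))\},
\]
and the only remaining task is to verify that $W'$ is finite.

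This finiteness verification is the main (and only non-bookkeeping) step, and it is where Corollary~\ref{cor:lf} enters. Since $\beta(H)<\infty$, the graph $H$ is locally finite; a simple induction on $k$ then shows that every distance ball $\{x\in V(H):d_H(w,x)\le k\}$ in a connected locally finite graph is finite. Because $I(d_H(w,u),d_H(w,v))$ contains only finitely many integers, the inner set in the union is contained in such a ball and is therefore finite; and since $W$ itself is finite, $W'$ is a finite union of finite sets. Hence $\beta(H^{\ast})\le|W'|<\infty$, which closes the induction and proves the corollary.
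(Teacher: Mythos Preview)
Your proposal is correct and follows essentially the same route as the paper: reduce to a single edge change, apply Theorems~\ref{thm:addition} and \ref{thm:removal}, and use Corollary~\ref{cor:lf} (local finiteness) to guarantee that the set $W'$ produced by Theorem~\ref{thm:addition} is finite, with the converse direction handled by reversing the sequence of modifications. The only difference is that you spell out explicitly why balls of bounded radius are finite in a locally finite graph, which the paper leaves implicit.
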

\begin{proof}
Suppose $\beta(G)<\infty$, and let $W$ be a finite resolving set. We can form a finite sequence of addition and removal of edges to construct $G'$. In each step, apply Theorem~\ref{thm:addition} or Theorem~\ref{thm:removal} to construct a new resolving set. Notice that the set we add in each step is finite. In Theorem~\ref{thm:removal} this is obvious; in Theorem~\ref{thm:addition} it follows from the fact that, by Corollary~\ref{cor:lf}, $G$ is locally finite.

Now suppose that $\beta(G)=\infty$. Suppose for a contradiction that a finite sequence of changes turns $G$ into $G'$, and $\beta(G')<\infty$. Then one can reverse the changes to construct $G$ from $G'$, contradicting the first part of the theorem.
\end{proof}

\subsection*{Proof of Theorem~\ref{thm:addition}}

Let $G$ be a graph with a resolving set $W$. Let $u,v\in V(G)$, $u\not\sim v$, and let $G'=G+uv$. Let $W'$ be the set defined in the statement of the theorem. If $W$ is a resolving set of $G'$, then $W'$ is also a resolving set of $G'$. So for the balance, we assume that $W$ is not a resolving set of $G'$.

Hence there are two vertices $x,y\in V(G')$ that are not resolved by $W$ in $G'$. However, they are resolved by $W$ in $G$, so there is a $w\in W$ such that $d_G(w,x)\neq d_G(w,y)$, but $d_{G'}(w,x)=d_{G'}(w,y)$.

For brevity of notation, we will write $d(a,b)=d_G(a,b)$ for distances in $G$, and $d'(a,b)=d_{G'}(a,b)$ for distances in $G'$. Furthermore we write $d(a)=d(w,a)$, and $d'(a)=d'(w,a)$. To reiterate previous statements with the new notations, without loss of generality
\[
d(x)<d(y)\qquad\text{and}\qquad d'(x)=d'(y).
\]

An important simple observation that we will repeatedly use is that distances in $G'$ are never longer than distances in $G$, so for all $a,b$, $d'(a,b)\leq d(a,b)$, and $d'(a)\leq d(a)$.

For the argument below, the key is the following lemma. Recall that a \emph{geodesic} between two vertices is a shortest path between them.
\begin{lemma}\label{lem:key}
Let $G$, $G'$, $u$, $v$, $x$, $y$, $w$ as before. Let $P$ be a geodesic from $w$ to $x$ in $G$, and let $w'\in V(P)$. If there exists a geodesic from $w'$ to $y$ in $G'$ that does not contain the edge $uv$, then $w'$ resolves $x$ and $y$ in $G'$. (See Figure~\ref{fig:lemma}.)
\end{lemma}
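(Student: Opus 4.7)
The plan is to assume for contradiction that $w'$ fails to resolve $x$ and $y$ in $G'$, i.e.\ $d'(w',x)=d'(w',y)$, and derive a contradiction from the fact that $d(x)<d(y)$ while $d'(x)=d'(y)$. The only non-routine input I will use beyond the triangle inequality and the universal bound $d'(\cdot,\cdot)\le d(\cdot,\cdot)$ is the avoidance hypothesis on the $w'$-$y$ geodesic.

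That hypothesis is deployed exactly once, at the outset, to obtain a $G'$-distance equal to a $G$-distance. A $w'$-$y$ geodesic in $G'$ that avoids $uv$ uses only edges of $G$, so it is already a $w'$-$y$ path in $G$; hence $d(w',y)\le d'(w',y)$. Combined with the universal bound, this upgrades to
\[
d'(w',y)=d(w',y).
\]
This is the only place where an equality between $G$- and $G'$-distances is produced; everything that follows is triangle-inequality bookkeeping.

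Next, because $w'$ lies on the $w$-$x$ geodesic $P$ in $G$, the additivity $d(x)=d(w,w')+d(w',x)$ is exact, whereas for $y$ only the triangle inequality $d(y)\le d(w,w')+d(w',y)$ is available. Subtracting and using $d(x)<d(y)$ gives the strict inequality $d(w',x)<d(w',y)$. Chaining everything together,
\[
d(w',x)<d(w',y)=d'(w',y)=d'(w',x)\le d(w',x),
\]
which is absurd; hence $d'(w',x)\ne d'(w',y)$, as required.

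The main (mild) obstacle is to spot the asymmetry being exploited: membership of $w'$ in $P$ yields exact additivity on the $x$-side but only a triangle inequality on the $y$-side, and the avoidance hypothesis is precisely what is needed to convert the $G'$-term on the $y$-side into a $G$-quantity, so that the strict gap $d(y)-d(x)>0$ survives the passage between the two graphs.
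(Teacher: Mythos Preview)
Your proof is correct and follows essentially the same route as the paper's: both assume $d'(w',x)=d'(w',y)$, use the avoidance hypothesis to get $d'(w',y)=d(w',y)$, then combine the triangle inequality $d(y)\le d(w,w')+d(w',y)$ with the geodesic additivity $d(x)=d(w,w')+d(w',x)$ and the bound $d'(w',x)\le d(w',x)$ to reach a contradiction with $d(x)<d(y)$. The only difference is cosmetic---you subtract $d(w,w')$ early and chain $d(w',x)<d(w',y)=d'(w',y)=d'(w',x)\le d(w',x)$, whereas the paper keeps $d(w,w')$ throughout and concludes $d(y)\le d(x)$ directly.
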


\begin{figure}
\begin{tikzpicture}[scale=0.7]

\node [label=$w$] (w) at (0,0) {};
\node [label=$w'$] (wp) at (4,2) {};
\draw (w) -- node [draw=none,above left,near end] {$P$} (wp);
\node [label=$x$] (x) at (7,3) {} edge (wp);
\node [label=left:$y$] (y) at (7,-1) {};

\draw (wp) to [out=-30, in=90] node [draw=none,above right] {$\not\ni uv$} (y);

\end{tikzpicture}
\caption{}\label{fig:lemma}
\end{figure}

\begin{proof}
Suppose that $w'$ does not resolve $x$ and $y$ in $G'$, that is $d'(w',x)=d'(w',y)$. Since there is a geodesic from $w'$ to $y$ in $G'$ that does not contain the edge $uv$, we have $d'(w',y)=d(w',y)$. Then
\[
d(y)\leq
d(w')+d(w',y)=
d(w')+d'(w',y)=
d(w')+d'(w',x)\leq
d(w')+d(w',x)
\]
Recall that $w'$ is on the geodesic $P$, so $d(w')+d(w',x)=d(x)$. We conclude that $d(y)\leq d(x)$, a contradiction.
\end{proof}

Now we proceed with the proof of the theorem. We distinguish two cases.

\subsubsection*{Case 1: $d(x)=d'(x)$ (Figure~\ref{fig:addition-1})}
\begin{figure}
\begin{tikzpicture}[scale=0.7]

\node [label=$w$] (w) at (0,0) {};
\node [label=$w'$] (wp) at (4,2) {} edge node [draw=none,above left,near start] {$P$} (w);
\node [label=below:$u$] (u) at (3,-1.6) {} edge node [draw=none,above right,near start] {$R'$} (w);
\node [label=below:$v$] (v) at (4,-2) {} edge [dashed] (u);
\node [label=right:$x$] (x) at (7,3) {} edge (wp);
\node [label=right:$y$] (y) at (7,-3) {};

\draw (wp) to [out=-90, in=90] node [draw=none,above right] {$Q'$} (y);
\draw (v) -- (y);

\end{tikzpicture}
\caption{Addition of edge, Case 1}\label{fig:addition-1}
\end{figure}
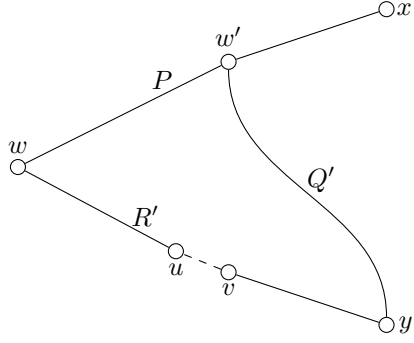

In this case we necessarily have $d'(y)<d(y)$. In other words, a geodesic $R'$ from $w$ to $y$ in $G'$ contains the edge $uv$; without loss of generality $u$ is the vertex closer to $w$ on $R'$. On the other hand, there is a geodesic $P$ from $w$ to $x$ that does not contain $uv$.

If $d(v)>d(x)$, then note that $d(x)$ must be in the interval of integers $[d(u),d(v)]$. Indeed, $d(u)<d(x)$, because $d'(x)=d'(y)$. Hence $x\in W'$, and $x$ trivially resolves $x$ and $y$.

So we may assume $d(v)\leq d(x)$. Let $w'\in V(P)$ be the unique vertex for which $d(w')=d(v)$. If $d'(w',x)\neq d'(w',y)$, then $w'$ resolves $x$ and $y$ in $G'$, so we are done. Otherwise we will argue that a geodesic $Q'$ from $w'$ to $y$ in $G'$ can not contain $uv$, and we get a contradiction by Lemma~\ref{lem:key}.

Indeed, if $\ell=d'(w',x)=d'(w',y)$, then we will prove that $v\not\in V(Q')$. For otherwise, $d'(v,y)\leq\ell$, and since $v$ is the vertex on the geodesic $R'$ that is closer to $y$, $d(v,y)=d'(v,y)\leq\ell$. Then
\[
d(y)\leq
d(v)+d(v,y)\leq
d(w')+\ell=
d(x),
\]
a contradiction.

\subsubsection*{Case 2: $d'(x)<d(x)$ and $d'(y)<d(y)$ (Figure~\ref{fig:addition-2})}
\begin{figure}
\begin{tikzpicture}[scale=0.7]

\node [label=$w$] (w) at (0,0) {};
\node [label=$w'$] (wp) at (4,2) {} edge node [draw=none,above left,near start] {$P$} (w);
\node [label=below:$u$] (u) at (3.5,0) {} edge (w);
\node [label=below:$v$] (v) at (4.5,0) {} edge [dashed] (u);
\node [label=right:$x$] (x) at (7,3) {} edge (wp);
\node [label=right:$y$] (y) at (7,-3) {};

\draw (v) to [out=0, in=-135] node [draw=none,above left] {$P'$} (x);
\draw (v) to [out=0, in=135] node [draw=none,below left] {$R'$} (y);

\end{tikzpicture}
\caption{Addition of edge, Case 2}\label{fig:addition-2}
\end{figure}
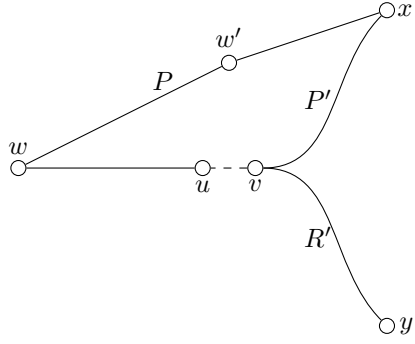

This time, there exist geodesics $P'$ from $w$ to $x$ and $R'$ from $w$ to $y$ in $G'$, both of which contain $uv$. Without loss of generality, let $u$ be the vertex on $P'$ closer to $w$. Consequently, $u$ is the vertex on $R'$ closer to $w$ as well. Note that $d'(x)=d'(y)$ implies $d(v,x)=d(v,y)$. Call this distance $\ell$. Now there is a geodesic $P$ from $w$ to $x$ in $G$ that is longer than $P'$.

If $d(v)>d(x)$, then again $d(x)\in[d(u),d(v)]$. This time $d(u)<d(x)$, because $d'(x)\leq d(x)$. Hence, again, $x\in W'$, and $x$ resolves $x$ and $y$.

So we may assume $d(v)\leq d(x)$. Let $w'$ be the unique vertex on $P$ for which $d(w')=d(v)$. Again, we will show that $w'$ resolves $x$ and $y$ in $G'$. Similarly as before, the strategy of the proof is that if we assume it does not, then we can show that $v$ is not on any geodesic from $w'$ to $y$ in $G'$, and Lemma~\ref{lem:key} finishes the proof.

As we assume that $w'$ does not resolve $x$ and $y$ in $G'$, we have $d'(w',x)=d'(w',y)$. As $w'$ is on the geodesic $P$,

\[
d(w',x)=
d(x)-d(w')<
d(y)-d(v)\leq
d(v,y)=
\ell.
\]

A quick observation is that $d'(w',x)=d(w',x)$ for otherwise there is a path from $w'$ to $x$ in $G'$ through $uv$ that is shorter than $d(w',x)$. However, $d(w',x)<\ell$, and $d(v,x)=\ell$.

So now we have seen that $d'(w',y)=d'(w',x)<\ell$, but $d'(v,y)=d(v,y)=\ell$. This shows that a geodesic from $w'$ to $y$ in $G'$ can not contain $v$, as we intended.

\subsection*{Proof of Theorem~\ref{thm:removal}}

Let $G$ be a graph with a resolving set $W$. Let $u,v\in V(G)$, $u\sim v$, and let $G'=G-uv$. Let $W'$ be the set defined in the statement of the theorem. As before, we assume that $W$ is not a resolving set of $G'$, for otherwise we are done.

Again, there are two vertices $x,y\in V(G')$ that are not resolved by $W$ in $G'$, but resolved in $G$, so there is a $w\in W$ such that (without loss of generality)
\[
d(x)<d(y)\qquad\text{and}\qquad d'(x)=d'(y).
\]

This time, distances in $G'$ are never shorter than distances in $G$, so for all $a,b$, $d(a,b)\leq d'(a,b)$, and $d(a)\leq d'(a)$.

Consider geodesics from $w$ to $x$ and from $w$ to $y$ in $G$. Let the last common vertex on them be $a$, and let the portion of these geodesics from $a$ to $x$ be called $P$, the portion from $a$ to $y$ be called $R$, and let the portion from $w$ to $a$ be called $Q$. The edge $uv$ is on one of these paths and it can not be on $R$. So again, we distinguish two cases.

\subsubsection*{Case 1: $uv$ is on $P$ (Figure~\ref{fig:removal-1})}
\begin{figure}
\begin{tikzpicture}[scale=0.7]

\node [label=below:$w$] (w) at (0,0) {};
\node [label=below:$a$] (a) at (5,0) {} edge node [draw=none,above] {$Q$} (w);
\node [label=below:$u$] (u) at (6.5,1) {} edge (a);
\node [label=below:$v$] (v) at (7.5,2) {} edge [dashed] node [draw=none,above left] {$P$} (u);
\node [label=right:$x$] (x) at (8,3) {} edge (v);
\node [label=right:$y$] (y) at (8,-3) {} edge [out=120,in=-30] node [draw=none,below left] {$R$} (a);

\end{tikzpicture}
\caption{Removal of edge, Case 1}\label{fig:removal-1}
\end{figure}
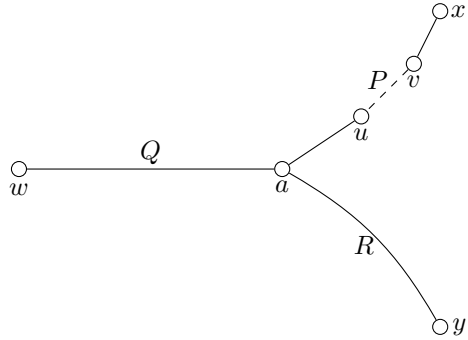

Without loss of generality, $v$ is the vertex farther from $w$. We claim $v$ resolves $x$ and $y$ in $G'$. Suppose not. Then
\[
d(v,x)=d'(v,x)=d'(v,y)\geq d(v,y),
\]
so
\[
d(x)=d(v)+d(v,x)\geq d(v)+d(v,y)\geq d(y),
\]
a contradiction.

\subsubsection*{Case 2: $uv$ is on $Q$ (Figure~\ref{fig:removal-2})}
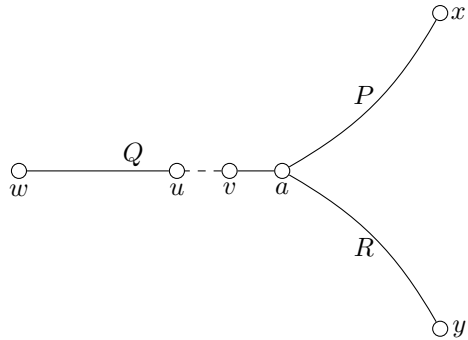
\begin{figure}
\begin{tikzpicture}[scale=0.7]

\node [label=below:$w$] (w) at (0,0) {};
\node [label=below:$u$] (u) at (3,0) {} edge node [draw=none,above,near start] {$Q$} (w);
\node [label=below:$v$] (v) at (4,0) {} edge [dashed] (u);
\node [label=below:$a$] (a) at (5,0) {} edge (v);
\node [label=right:$x$] (x) at (8,3) {} edge [out=-120,in=30] node [draw=none,above left] {$P$} (a);
\node [label=right:$y$] (y) at (8,-3) {} edge [out=120,in=-30] node [draw=none,below left] {$R$} (a);

\end{tikzpicture}
\caption{Removal of edge, Case 2}\label{fig:removal-2}
\end{figure}

Without loss of generality, $v$ is the vertex farther from $w$. We claim $v$ resolves $x$ and $y$ in $G'$. Suppose not. Then
\[
d(x)=d(v)+d(v,x)=d(v)+d'(v,x)=d(v)+d'(v,y)=d(v)+d(v,y)=d(y),
\]
a contradiction.

\section{Single edge change}

In Section~\ref{sec:stable} we have shown that changing finitely many edges can not turn the metric dimension from finite to infinite or vice versa. We have shown this by proving that a single edge change can not turn the metric dimension from finite to infinite or vice versa.

It is natural to ask, what a change of a single edge can do in terms of metric dimension. In particular, is there a bound on the change in the metric dimension after we change a single edge?

It is clear from Theorem~\ref{thm:removal} that removal of an edge can not increase the metric dimension by more than $2$. So in this section we focus on addition of an edge.

This question is particularly important, because if there is a bound (say, addition of a single edge can not change the metric dimension by more than $k$), then it would render our result from Section~\ref{sec:stable} trivial.

Of course this question is meaningful for both finite and infinite graphs. However, as we will see later, as long as there is no bound for finite graphs, we can show that there is no bound for infinite graphs. So in the next part of the section, we deal with finite graphs, and we return to the infinite case at the end of the section.

In \cite{EFKY}, the authors state the following theorem.

\begin{theorem}
There exists a graph $G$ and a non-edge $e$ in $G$ such that $\beta(G+e)-\beta(G)$
can be arbitrarily large.
\end{theorem}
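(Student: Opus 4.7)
The plan is to exhibit, for each integer $k\ge 1$, a finite graph $G_k$ and a non-edge $e_k$ in $G_k$ with $\beta(G_k+e_k)-\beta(G_k)\ge k$. The main pressure I would use is the standard observation that a pair of distinct vertices $a,b$ in a graph $H$ with $d_H(w,a)=d_H(w,b)$ for every $w\in V(H)\setminus\{a,b\}$ (a pair of \emph{metric twins}) must meet every resolving set of $H$; producing $k$ pairwise-disjoint metric twin pairs in $G_k+e_k$ then immediately forces $\beta(G_k+e_k)\ge k$. So the task reduces to building $G_k$ with a small (bounded) resolving set such that a single well-chosen edge creates $k$ metric twin pairs at once.

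My construction plan is to place a long backbone (a path, or nearly so) and attach $2k$ pendant-like vertices $a_1,b_1,\dots,a_k,b_k$ asymmetrically along it, one pair per location. The asymmetry should be calibrated so that in $G_k$ each pair $\{a_i,b_i\}$ is separated by distance to just a few backbone landmarks — enough to give $\beta(G_k)=O(1)$ — but the asymmetry itself should be geometrically encoded in the shortest-path structure along the backbone, not in local neighborhoods. The edge $e_k$ is then taken to be a single shortcut (for instance joining the two endpoints of the backbone, turning it into a cycle). The point of this choice is that $G_k+e_k$ acquires a global symmetry exchanging $a_i\leftrightarrow b_i$ simultaneously for all $i$, either as an automorphism of $G_k+e_k$ or, in a simpler variant, by making each $a_i,b_i$ into false or true twins in the neighborhood sense. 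Either way, each $\{a_i,b_i\}$ is a metric twin pair in $G_k+e_k$, yielding the desired lower bound.

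The main obstacle is marrying the two requirements on the same $G_k$: (a) $\beta(G_k)$ is small, which requires enough asymmetry between $a_i$ and $b_i$ in $G_k$; and (b) the single edge $e_k$ simultaneously destroys all $k$ asymmetries, producing $k$ twin pairs in $G_k+e_k$. The delicate part is verifying (b) — a single edge only shortens distances along paths that pass through it, so one must ensure that the shortcut provided by $e_k$ equalizes $d(w,a_i)$ and $d(w,b_i)$ for every external $w$ and every $i$. The natural way to arrange this is to make the asymmetry between $a_i$ and $b_i$ in $G_k$ consist precisely of a length-difference that $e_k$ corrects, so that the shortcut created by $e_k$ converts every would-be distinguishing path into a pair of equal-length routes. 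Once such a $G_k$ and $e_k$ are in hand, the bound $\beta(G_k+e_k)\ge k\ge k+\beta(G_k)-O(1)$ follows directly from the twin-pair principle.
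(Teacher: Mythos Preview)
Your plan has a real gap at the step where you conclude that each $\{a_i,b_i\}$ becomes a metric twin pair in $G_k+e_k$. A single automorphism $\sigma$ of $G_k+e_k$ that swaps $a_i\leftrightarrow b_i$ \emph{for all $i$ simultaneously} does not make any individual pair into metric twins: for $j\ne i$ the map $\sigma$ sends $a_j$ to $b_j$, so all you get is $d(a_j,a_i)=d(b_j,b_i)$, not $d(a_j,a_i)=d(a_j,b_i)$. Thus $a_j$ may perfectly well distinguish $a_i$ from $b_i$, and the ``$k$ disjoint twin pairs $\Rightarrow \beta\ge k$'' argument does not apply. Your fallback variant---making each $\{a_i,b_i\}$ into false or true twins---fails for the opposite reason: neighbourhood-twinship is a local condition, and adding a single edge $e_k$ not incident to $a_i$ or $b_i$ cannot change $N(a_i)$ or $N(b_i)$. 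So either the pair was already a twin pair in $G_k$ (forcing $\beta(G_k)\ge k$ and killing the conclusion), or $e_k$ is incident to that pair, which can happen for at most one value of $i$. You acknowledge that marrying (a) and (b) is ``the delicate part'', but neither mechanism you propose actually accomplishes it, and no concrete $G_k$ is given.

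For comparison, the paper's constructions (the ``kites'' picture and the stronger Theorem with conflict-free ternary strings) do not use twins at all. They build a book-like graph: many pages, each a short path with one end $a_i$ attached to a common hub $w_0$, and the other end attached to a small landmark set $w_1,\dots,w_d$ in page-dependent ways. In $G$ the landmarks resolve the pages, so $\beta(G)\le d+1$. The added edge is a shortcut on the landmark side (joining $w_0$ to a vertex $c$ adjacent to all $w_j$); in $G+e$ every off-page vertex now reaches each $a_i$ through the same bottleneck $w_0$, so all the $a_i$ become mutually equidistant from everything off the pages. Distinguishing any two $a_i$ then requires a vertex on one of their pages, giving $\beta(G+e)\ge t_d-1$. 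The indistinguishability is engineered by a single shared bottleneck, not by symmetry between pairs.
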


While the idea in their paper seems correct, the short proof (which is an example graph $G$) is not correct.

This issue seems to have been unnoticed. Mashkaria, \'Odor, and Thiran~\cite{MOT} cite the paper \cite{EFKY}, and claim an improved example, which increases the metric dimension exponentially. More precisely, they prove the following theorem.

\begin{theorem}\label{thm:MOT}
For all $k\geq 3$ there exists a graph $G$ and a non-edge $e$ in $G$ such that $\beta(G)\leq k$ and $\beta(G+e)\geq 2^{k-1}-2$.
\end{theorem}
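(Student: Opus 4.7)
The plan is to mimic the strategy of Mashkaria, Ódor, and Thiran: exhibit an explicit finite graph $G$ together with a non-edge $e$ so that in $G$ a small ``binary-encoding'' set of size $k$ is resolving, while adding $e$ creates a shortcut that wipes out this encoding and leaves an exponentially large set of near-twins.

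First, I would construct $G$ on a vertex set of the form $W \cup U \cup A$, where $W = \{w_1,\ldots,w_k\}$ is the intended resolving set, $A = \{u,v\}$ are two ``hub'' vertices chosen so that $uv \notin E(G)$, and $U = \{x_S : S \subseteq \{1,\ldots,k-1\},\ 1 \leq |S| \leq k-2\}$ has exactly $2^{k-1}-2$ elements. The edges would be designed so that the metric code $r(x_S\mid W)$ encodes the characteristic vector of $S$ in a fixed way: for instance, by attaching $x_S$ to the landmarks in $\{w_i : i \in S\}$ through short paths and to the remaining landmarks through longer paths routed via $u$ and $v$. The key design goal is that, in $G$, distinct subsets $S$ produce distinct codes, giving $\beta(G) \leq k$.

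Next, I would take $e = uv$ and verify that in $G + e$ this new edge forms a common shortcut between every $w_i$ and every $x_S$, so that $d_{G+e}(w_i, x_S)$ becomes a constant $c_i$ depending only on $i$, not on $S$. This destroys the resolving power of $W$. By arranging the construction carefully (symmetric attachment of $U$ to $\{u,v\}$), one can further make $d_{G+e}(x_S, x_{S'})$ equal to a single common value, say $2$, for all $S \neq S'$, and $d_{G+e}(z, x_S)$ independent of $S$ for each $z \in W \cup A$. Thus in $G+e$ the entire set $U$ becomes a set of pairwise \emph{metric twins} with respect to $V(G+e) \setminus U$.

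Finally, I would apply the standard twin principle: if $T$ is a set of pairwise metric twins in a graph $H$, then every resolving set of $H$ contains at least $|T|-1$ vertices of $T$. Applied to $T = U$ in $H = G+e$, this forces any resolving set to contain at least $2^{k-1}-3$ vertices of $U$; a small bookkeeping adjustment (adding any one external landmark whose use is unavoidable) yields $\beta(G+e) \geq 2^{k-1}-2$. The main obstacle is engineering the tension between the two requirements: the graph must be rich enough that the $2^{k-1}-2$ codes over $W$ are already distinct in $G$, yet fragile enough that inserting the single edge $uv$ collapses all of these codes at once and simultaneously equalises all pairwise $U$--$U$ distances. Balancing the connectivity paths (needed to give $x_S$ its correct ``far'' distances in $G$) against the requirement that they not bypass $\{u,v\}$ in $G+e$ is the delicate step of the construction.
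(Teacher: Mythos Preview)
Your proposal is a blueprint, not a proof. You list the properties a construction should have---distinct binary-encoded codes for the $x_S$ in $G$, and collapse to pairwise metric twins in $G+e$---but you never exhibit an edge set and verify these properties. You say so yourself in the last paragraph: ``engineering the tension'' and ``balancing the connectivity paths'' are identified as ``the delicate step'' and then left undone. That step is the entire content of the theorem. It is far from obvious that one edge $uv$ can simultaneously equalise every $w_i$--$x_S$ distance \emph{and} every $x_S$--$x_{S'}$ distance while the graph without $uv$ still separates all $2^{k-1}-2$ codes; asserting that such a graph exists is not a proof that it does. There is also an arithmetic gap: the twin argument yields only $|U|-1 = 2^{k-1}-3$ forced vertices, and your ``bookkeeping adjustment'' to reach $2^{k-1}-2$ assumes without argument that at least one vertex outside $U$ is unavoidable in every resolving set of $G+e$.

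For comparison: the paper does not prove this theorem at all---it is quoted from Mashkaria, \'Odor, and Thiran~\cite{MOT} and used as a black box. The paper's own contribution is the strengthening in Theorem~\ref{thm:nonbinary}, and \emph{that} proof does give a fully explicit construction (disjoint $P_4$ ``pages'' indexed by conflict-free ternary strings, attached through ``ramps'' of length $1$ or $2$ to digit vertices $w_1,\dots,w_d$, with a single collapse edge $cw_0$). Looking at that argument will show you the level of concreteness required: every distance is computed and every pair of vertices is checked, which is precisely what your sketch omits.
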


Since they provide a correct proof, this already ensures that our Theorems~\ref{thm:addition} and \ref{thm:removal} and Corollary~\ref{cor:stable} are meaningful. However, this result raises another interesting question.

\begin{question}\label{q:bound}
Is there a function $f(d)$ such that $\beta(G+e)\leq f(\beta(G))$ for all $G$ and $e\not\in E(G)$? If there is one, what is the smallest such function? More precisely, determine
\[
\iota(d)=\max\{\beta(G+e): \beta(G)=d, e\not\in E(G)\}.
\]
\end{question}

We know $\iota(1)=2$, but all the other values (or indeed their existence) are unknown. Theorem~\ref{thm:MOT} shows that $\iota(d)=\Omega(2^d)$.

We first present an example (without proof) of a relatively simple graph for which adding an edge roughly doubles the metric dimension. This example simplifies and corrects ideas from \cite{EFKY}. See Figure~\ref{fig:kites}.

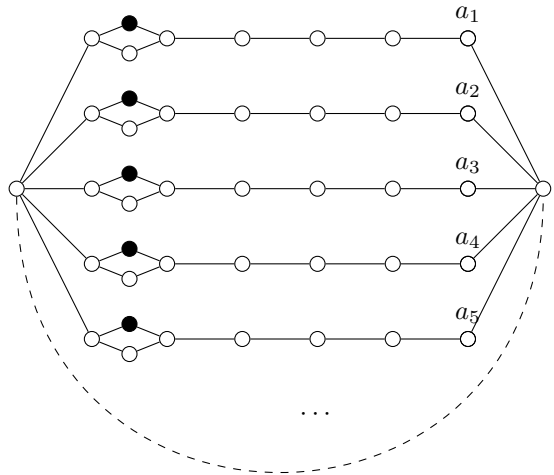
\begin{figure}
\begin{tikzpicture}
\node (u) at (0,0) {};

\node (a1) at (1,2) {} edge (u);
\node (b1) at (1,1) {} edge (u);
\node (c1) at (1,0) {} edge (u);
\node (d1) at (1,-1) {} edge (u);
\node (e1) at (1,-2) {} edge (u);

\node [fill] (a2) at (1.5,2.2) {} edge (a1);
\node (a2p) at (1.5,1.8) {} edge (a1);
\node [fill] (b2) at (1.5,1.2) {} edge (b1);
\node (b2p) at (1.5,0.8) {} edge (b1);
\node [fill] (c2) at (1.5,0.2) {} edge (c1);
\node (c2p) at (1.5,-0.2) {} edge (c1);
\node [fill] (d2) at (1.5,-0.8) {} edge (d1);
\node (d2p) at (1.5,-1.2) {} edge (d1);
\node [fill] (e2) at (1.5,-1.8) {} edge (e1);
\node (e2p) at (1.5,-2.2) {} edge (e1);

\node (a3) at (2,2) {} edge (a2) edge (a2p);
\node (b3) at (2,1) {} edge (b2) edge (b2p);
\node (c3) at (2,0) {} edge (c2) edge (c2p);
\node (d3) at (2,-1) {} edge (d2) edge (d2p);
\node (e3) at (2,-2) {} edge (e2) edge (e2p);

\foreach \x in {4,...,7} {
  \pgfmathparse{\x-1}
  \node (a\x) at (\pgfmathresult,2) {} edge (a\pgfmathresult);
  \node (b\x) at (\pgfmathresult,1) {} edge (b\pgfmathresult);
  \node (c\x) at (\pgfmathresult,0) {} edge (c\pgfmathresult);
  \node (d\x) at (\pgfmathresult,-1) {} edge (d\pgfmathresult);
  \node (e\x) at (\pgfmathresult,-2) {} edge (e\pgfmathresult);}

\node[label=$a_1$] (a7) at (6,2) {} ;
\node[label=$a_2$] (b7) at (6,1) {} ;
\node[label=$a_3$] (c7) at (6,0) {} ;
\node[label=$a_4$] (d7) at (6,-1) {} ;
\node[label=$a_5$] (e7) at (6,-2) {} ;

\node (v) at (7,0) {} edge (a7) edge (b7) edge (c7) edge (d7) edge (e7); 

\node [draw=none,below=of e5] {$\cdots$};

\draw [dashed] (u) .. controls (0,-5) and (7,-5) .. (v);

\end{tikzpicture}
\caption{The filled vertices form a resolving set of the graph without the dashed edge. Once the dashed edge is added, the vertices on the right ($a_1$ to $a_5$) are not resolved. The metric dimension roughly doubles with the addition of the edge.}\label{fig:kites}
\end{figure}

Next, we add our contribution to Question~\ref{q:bound}. The following theorem is a strengthening of Theorem~\ref{thm:MOT}.

\begin{theorem}\label{thm:nonbinary}
For all $k\geq 4$ there exist a graph $G$ and $e\not\in E(G)$ such that $\beta(G)\leq k$, but $\beta(G+e)\geq (k+1)2^{k-2}-1$. Hence $\iota(d)=\Omega(d2^d)$.
\end{theorem}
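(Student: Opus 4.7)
The plan is to exhibit, for each $k \ge 4$, an explicit graph $G = G_k$ together with a non-edge $e = uv$ such that $G$ admits a resolving set of size $k$, while $G+e$ contains a set $T$ of $(k+1)2^{k-2}$ mutually metrically indistinguishable vertices---a single twin class, in which any two vertices have identical distances to every vertex outside $T$. Since any resolving set of $G+e$ must contain at least $|T|-1$ vertices of $T$, the lower bound $\beta(G+e) \ge (k+1)2^{k-2}-1$ follows immediately.

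My approach is to adapt the construction underlying Theorem~\ref{thm:MOT}. That construction (as I understand it) attaches $2^{k-1}$ pendant vertices to a \emph{bit-decoder} gadget in such a way that adding $e=uv$ identifies each pendant with its complementary partner, yielding a twin class of size roughly $2^{k-1}$. To inflate this class by a factor proportional to $k$, I would replace each symmetric pendant pair with a short path of length $k+1$, producing $2^{k-2}$ inflated paths whose $k+1$ interior vertices will all become pairwise indistinguishable after the addition of $e$. If the gadget is calibrated correctly, the resulting $T$ has exactly $(k+1)2^{k-2}$ elements.

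For the upper bound $\beta(G)\le k$, the plan is to use a resolving set consisting of $u$, $v$, and $k-2$ decoder vertices, one per bit of the path label. Because $u$ and $v$ are far apart in $G$ (the edge $e$ is absent), the pair $(d(u,x),d(v,x))$ pins down the position of $x$ along its inflated path, while the decoder distances identify which of the $2^{k-2}$ paths contains $x$; distinct (label, position) pairs then yield distinct codes via a routine distance calculation. The addition of $e$ collapses this structure: $u$ and $v$ become adjacent, so the distance from any external decoder vertex to a vertex on an inflated path becomes the minimum of its distances to the two path endpoints, which depends only on the path itself, not on the interior position. This is the mechanism that turns each path's interior into a single twin class; simultaneously, the decoder bits cease to distinguish complementary labels, merging the $2^{k-2}$ per-path classes into one large $T$.

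The main obstacle will be calibrating the path length---namely, $k+1$---so that it is simultaneously \emph{large enough} to make $|T|=(k+1)2^{k-2}$ after collapse and \emph{short enough} that the $k-2$ decoder vertices continue to resolve $G$ itself. I expect this to require an analog of the distance functions $\alpha_i,\beta_i$ from Section~\ref{sec:growing}, together with case distinctions on the parity of the position along a path relative to the path length. As a sanity check, the base case $k=4$ (for which the bound is $19$) should be verifiable by hand along the lines of the kite example in Figure~\ref{fig:kites}, and the general construction should then scale the decoder subtree inductively while keeping the inflated paths of the same relative proportions.
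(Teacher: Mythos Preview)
What you have written is a plan, not a proof: no graph is actually defined, and the two crucial claims---that your $k$ vertices resolve $G$, and that $G+e$ contains a single twin class of the required size---are asserted without verification. The more serious gap is the second. You propose $2^{k-2}$ inflated paths distinguished by $k-2$ binary decoder bits, and you want adding $e=uv$ to merge all $(k+1)2^{k-2}$ interior vertices into one twin class. But adding $e$ only affects distances that route through $u$ or $v$; if decoder $d_j$ is attached so as to be closer to paths with $j$th bit equal to $1$, that asymmetry persists after $e$ is added, so vertices on different paths remain distinguishable from $d_j$. Your remark about ``complementary labels'' hints at a pairing mechanism, but pairing $2^{k-2}$ paths cuts the number of classes at most in half, not to one. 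Without a concrete construction this cannot be checked, but as described the mechanism does not produce a single large twin class, and the lower bound you would actually obtain from many small classes is $k\cdot 2^{k-2}$, short of the stated $(k+1)2^{k-2}-1$.

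For comparison, the paper's construction is quite different and does not rely on a single twin class at all. It sets $d=k-1$ and uses \emph{ternary} strings: a ``conflict-free'' family of size $t_d\ge 2^d+d\,2^{d-1}=(k+1)2^{k-2}$ indexes $t_d$ disjoint copies of $P_4$ (``pages''), all fanned out from a hub $w_0$; the far endpoints $b_i$ are attached to digit vertices $w_1,\dots,w_d$ via ramps of length $1$ or $2$ according to the ternary digits, and a vertex $c$ is joined to every $w_j$. The set $\{w_0,\dots,w_d\}$ resolves $G$---the conflict-free condition is precisely what makes the ramp midpoints distinguishable---and adding the edge $e=cw_0$ makes every page-tip $a_i$ indistinguishable from any vertex off the pages, forcing at least $t_d-1$ resolving vertices onto the pages. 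The essential idea your sketch is missing is the third symbol: the length-$2$ ramps are what allow $(d+2)2^{d-1}$ pages rather than only $2^d$, and the conflict-free restriction is exactly what keeps the resolving set of size $d+1$ despite the extra ramp midpoints.
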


Before we can get to the proof we propose a related problem in enumerative combinatorics.

Let $T_n$ be the set of ternary strings of length $n$, consisting of digits $0$, $1$, and $2$. For $x\in T_n$, let $x(i)$ denote the $i$th digit of $x$. We say two distinct elements $x,y\in T_n$ \emph{conflict}, if there exists an $i$ with $x(i)=y(i)=2$, and for all $j$ for which $x(j)\neq y(j)$, we have $\{x(j),y(j)\}=\{0,2\}$.

We call a subset $S\subseteq T_n$ \emph{conflict-free}, if no two elements in $S$ conflict. Let $t_n$ be the size of a largest conflict-free subset of $T_n$.

\begin{observation} $t_n\geq 2^n+n2^{n-1}$
\end{observation}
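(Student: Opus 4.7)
The plan is to prove the bound by writing down an explicit conflict-free subset $S \subseteq T_n$ of the required size. The construction is motivated by the observation that the conflict relation first demands a coordinate on which both strings carry a $2$, so I use strings containing very few $2$s. Concretely, let $B \subseteq T_n$ be the $2^n$ binary strings (no $2$s at all), and let $U \subseteq T_n$ be the collection of $n \cdot 2^{n-1}$ strings containing the digit $2$ in exactly one coordinate (choose the position of the unique $2$, then fill the other $n-1$ entries from $\{0,1\}$ freely). Taking $S = B \cup U$, disjointness gives $|S| = 2^n + n\cdot 2^{n-1}$, matching the claimed bound.

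To verify that $S$ is conflict-free, the argument proceeds by a short case analysis organized around the mandatory shared $2$. No two strings in $B$ share a coordinate of value $2$, since neither contains one; likewise no $B$-string shares a $2$ with a $U$-string. That leaves pairs $x,y \in U$ whose unique $2$s sit in positions $i$ and $j$ respectively. If $i \ne j$, there is no shared $2$-coordinate and conflict fails on the first condition. If $i = j$, then outside coordinate $i$ both $x$ and $y$ are binary, so since $x \ne y$ they differ at some coordinate $k \ne i$ where $\{x(k),y(k)\} = \{0,1\} \ne \{0,2\}$, and conflict fails on the second condition.

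There is no real obstacle in carrying this out --- the entire proof reduces to choosing the right family and then running the two-line case check above. The genuinely hard direction here is an upper bound on $t_n$ (and presumably its role in tightening $\iota(d)$), which is not what this observation asks for.
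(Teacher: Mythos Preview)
Your proposal is correct and is exactly the paper's approach: the set $S=B\cup U$ you describe is precisely ``the set of strings that contain at most one $2$,'' which the paper asserts in one line to be conflict-free, and your case analysis simply spells out that assertion.
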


\begin{proof}
The set of strings that contain at most one $2$ is clearly conflict-free.
\end{proof}

We note that one can prove that $t_n=2^n+n2^{n-1}$, but the proof is more complicated, and we only need the easy lower bound.

\begin{proof}[Proof of Theorem~\ref{thm:nonbinary}]
Let $d=k-1$. We will define a graph $G$ and $e\not\in E(G)$ such that $\beta(G)\leq d+1$, but $\beta(G+e)\geq t_d-1\geq (d+2)2^{d-1}-1$.

Let $G$ be defined as follows. Start with $t_d$ disjoint copies of $P_4$ (path with 4 edges) with endpoints $a_i$ and $b_i$ ($i=1,\ldots,t_d$). We will call these paths ``pages''. Add a vertex $w_0$ and join it to each $a_i$. Add vertices $w_1,\ldots,w_d$ that will be connected to various $b_i$'s via paths of length $1$ or $2$, as described below. The vertices $w_1,\ldots,w_d$ will be called ``digits''.

Reindex the elements $b_1,\ldots b_{t_d}$ with a set of conflict-free elements of $T_d$. So the new index of $b_i$ is a ternary string $x$. Connect $b_i$ to $w_j$ via a path of length $x(j)$, if $x(j)>0$. (If $x(j)=0$, no connection is made.) These connecting paths will be called ``ramps''.

Finally, add a vertex $c$ and connect it to each of $w_1,\ldots,w_d$. See Figure~\ref{fig:nonbinary}.

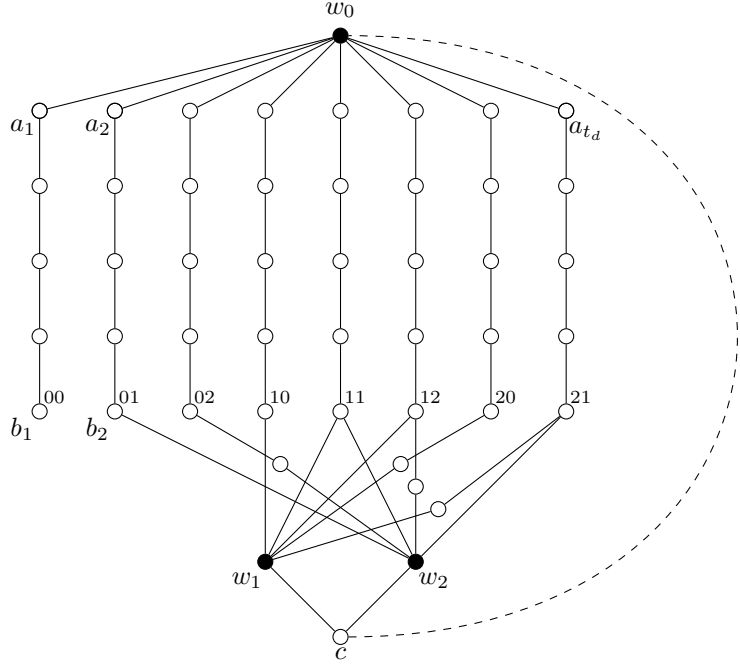
\begin{figure}
\begin{tikzpicture}
\node [fill,label=$w_0$] (w0) at (0,0) {};

\foreach \i in {1,...,8} {
  \pgfmathparse{\i-5}
  \node (p\i0) at (\pgfmathresult,-1) {} edge (w0);
  \foreach \j in {1,...,3} {
    \pgfmathparse{int(\j-1)}
    \node (p\i\j) at (\i-5,-\j-1) {} edge (p\i\pgfmathresult);
  }
}

\node [label=below left:$a_1$] at (-4,-1) {};
\node [label=below left:$a_2$] at (-3,-1) {};
\node [label=below right:$a_{t_d}$] at (3,-1) {};

\node [label=above right:{$\scriptstyle 00$},label=below left:{$b_1$}] (b00) at (-4,-5) {} edge (p13);
\node [label=above right:{$\scriptstyle 01$},label=below left:{$b_2$}] (b01) at (-3,-5) {} edge (p23);
\node [label=above right:{$\scriptstyle 02$}] (b02) at (-2,-5) {} edge (p33);
\node [label=above right:{$\scriptstyle 10$}] (b10) at (-1,-5) {} edge (p43);
\node [label=above right:{$\scriptstyle 11$}] (b11) at (0,-5) {} edge (p53);
\node [label=above right:{$\scriptstyle 12$}] (b12) at (1,-5) {} edge (p63);
\node [label=above right:{$\scriptstyle 20$}] (b20) at (2,-5) {} edge (p73);
\node [label=above right:{$\scriptstyle 21$}] (b21) at (3,-5) {} edge (p83);

\node (r02) at (-0.8,-5.7) {} edge (b02);
\node (r12) at (1,-6) {} edge (b12);
\node (r20) at (0.8,-5.7) {} edge (b20);
\node (r21) at (1.3,-6.3) {} edge (b21);

\node [fill,label=below left:$w_1$] (w1) at (-1,-7) {} edge (b10) edge (b11) edge (b12) edge (r20) edge (r21);
\node [fill,label=below right:$w_2$] (w2) at (1,-7) {} edge (b01) edge (b11) edge (b21) edge (r02) edge (r12);

\node [label=below:$c$] (c) at (0,-8) {} edge (w1) edge (w2);

\draw [dashed] (c) .. controls (7,-8) and (7,0) .. (w0);

\end{tikzpicture}
\caption{The filled vertices form a resolving set of the graph without the dashed edge. Once the dashed edge is added, every page needs a resolving vertex, except maybe one.}\label{fig:nonbinary}
\end{figure}

We claim that $\{w_0,\ldots,w_d\}$ is a resolving set of $G$.

First we show that if $v\in V(G)$ is on one of the pages, its distance vector is unique. The position of $v$ on a page (and whether it is on a page) is determined by its distance from $w_0$. The page index is then determined by the digits.

Since $c$ is of distance $1$ from each digit, we just have to show that it is distinguished from each vertex of the same property. There is (at most) one other vertex with this property: it is the $b_i$ whose new index is $11\ldots 1$. They are distinguished by their distance from $w_0$.

The only vertices remaining are the midpoints of the ramps. They are distinguished from the rest of the vertices by the fact that they are all of distance $6$ from $w_0$, and the only other vertices of distance $6$ from $w_0$ are the digits.

However, we need to be careful here. The danger is that two of these vertices have the same distance vector. To understand why this can't happen, let us study their distance vectors. For $j=0,\ldots,d$, we will refer to the distance from $w_j$ as the $j$th coordinate of the distance vector.

Let $v$ be a midpoint of a ramp that connects $w_i$ to a vertex indexed with the ternary string $x$. By definition, $x(i)=2$. For $j\geq 1$, the $j$th coordinate of the distance vector is
\[
\begin{cases}
1&\text{if $j=i$};\\
2&\text{if $j\neq i$ and $x(j)=1$};\\
3&\text{if $j\neq i$ and $x(j)\in\{0,2\}$}.\\
\end{cases}
\]

Let $u,v$ be two midpoints of ramps, and suppose they have the same distance vectors. Suppose $u$ is on a ramp of $w_i$ and $v$ is on a ramp of $w_k$. Then the $i$th coordinate of the distance vector of $u$ is $1$, while the $i$th coordinate of the distance vector of $v$ is $1$ only if $i=k$. So we have shown that $u$ and $v$ are both on a ramp of $w_i$.

Now suppose the other ends of the ramps are indexed with ternary strings $x$ and $y$. From the fact that the distance vectors of $u$ and $v$ are identical, it follows that wherever $x$ and $y$ differ, one of them has a $0$, and the other has a $2$. Together with the fact that $x(i)=y(i)=2$, this shows that $x$ and $y$ conflict, which is a contradiction.

It remains to be shown that adding an edge to $G$ raises the metric dimension to at least $t_d-1$. Indeed, if we add the edge $cw_0$, the vertices $a_i$ can not be distinguished by any of the vertices that are not on the pages. Furthermore, if there are two pages $i,j$ with no resolving vertex on either one, then $a_i$ and $a_j$ can not be distinguished. So we need a vertex in the resolving set from each page, except perhaps one. That shows $\beta(G+e)\geq t_d-1$.
\end{proof}

As mentioned above, one can prove that $t_d=2^d+d2^{d-1}$. So these techniques can not be used to further improve the theorem.

Finally, as promised, we return to the discussion of infinite graphs.

\begin{corollary}\label{cor:infiniteunstable}
For all $k\in\mathbb{N}$, there exists an infinite graph $H$, and nonadjacent vertices $u,v$  of $H$ such that $\beta(H+uv)-\beta(H)>k$.  
\end{corollary}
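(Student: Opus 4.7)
The plan is to lift a finite example from Theorem~\ref{thm:nonbinary} to the infinite setting by attaching a pendant ray. Given $k \in \mathbb{N}$, Theorem~\ref{thm:nonbinary} yields a finite graph $G$ and a non-edge $uv$ of $G$ with $\beta(G+uv) - \beta(G) > k+2$. Fix any vertex $p \in V(G)$ and construct $H$ by attaching a one-way infinite ray $r_0 r_1 r_2 \cdots$ to $G$ via the single edge $pr_0$, where $r_0, r_1, \dots$ are new vertices. Then $H$ is a connected infinite graph with $uv \notin E(H)$, and the goal is to bound $\beta(H)$ from above and $\beta(H+uv)$ from below so that the gap transfers with at most a bounded loss.

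For the upper bound I claim $\beta(H) \leq \beta(G)+1$. Let $W$ be a minimum resolving set of $G$ and consider $W' = W \cup \{r_0\}$. Since $p$ is the unique gateway between the ray and $V(G)$, one has $d_H(w, r_i) = d_G(w, p) + 1 + i$ for every $w \in V(G)$ and $d_H(r_0, r_i) = i$. Thus $r_0$ distinguishes any two ray vertices, and if $r_0$ fails to distinguish a ray vertex $r_i$ from a $G$-vertex $x$ then $i = d_G(p,x) + 1$; for any $w \in W$ the triangle inequality forces
\[
d_H(w, r_i) - d_H(w, x) = d_G(w, p) + d_G(p, x) + 2 - d_G(w, x) \geq 2,
\]
so $w$ distinguishes them. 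Pairs inside $V(G)$ are already resolved by $W \subseteq W'$.

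For the lower bound I claim $\beta(H+uv) \geq \beta(G+uv) - 1$. The ray remains pendant at $p$ in $H+uv$, so every shortest path between a ray vertex and a $G$-vertex must pass through $p$; hence $d_{H+uv}(r_i, x) = i + 1 + d_{G+uv}(p, x)$ for $x \in V(G)$, and $d_{H+uv}(w, x) = d_{G+uv}(w, x)$ for $w, x \in V(G)$. Therefore a ray vertex resolves a pair $\{x, y\} \subseteq V(G)$ in $H+uv$ precisely when $p$ resolves it in $G+uv$, while every $G$-vertex has identical resolving behavior in both graphs. Consequently, if $W^*$ is a resolving set of $H+uv$ then $(W^* \cap V(G)) \cup \{p\}$ is a resolving set of $G+uv$, so $|W^*| \geq |W^* \cap V(G)| \geq \beta(G+uv) - 1$. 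Combining the two estimates yields $\beta(H+uv) - \beta(H) \geq \beta(G+uv) - \beta(G) - 2 > k$, as required. The main technical step is the distance analysis for $H+uv$: one must carefully rule out shortcuts through the ray, which succeeds only because the ray is attached at the single vertex $p$ and $uv$ lies entirely within $V(G)$.
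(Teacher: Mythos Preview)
Your proof is correct and follows essentially the same route as the paper: attach an infinite ray to a finite example from Theorem~\ref{thm:nonbinary} and show that passing to $H$ changes $\beta$ by a bounded additive constant on each side, so the large gap survives. The paper packages the two estimates as a single lemma $\beta(G)\le\beta(H)\le\beta(G)+2$ (using $W\cup\{p,r_0\}$ for the upper bound and a slightly sharper lower-bound argument), whereas you trade a tighter upper bound $\beta(H)\le\beta(G)+1$ for a looser lower bound $\beta(H+uv)\ge\beta(G+uv)-1$; the net loss of $2$ is identical.
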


\begin{proof}

We begin by proving a lemma that relates the metric dimension of finite graphs with the metric dimension of certain infinite graphs constructed from them.

\begin{lemma}\label{lem:tail}
Let $G$ be a finite graph on the vertex set $V=\{v_1,\ldots,v_n\}$. Let $H$ be the countably infinite graph constructed from $G$ by adding the vertex set $U=\{u_1,u_2,\ldots\}$, and edges $v_1u_1$, $u_1u_2$, $u_2u_3$, \dots. Then
\[
\beta(G)\leq\beta(H)\leq\beta(G)+2
\]
\end{lemma}

\begin{proof}
Let $W$ be a smallest resolving set of $G$. We claim that $W\cup\{v_1,u_1\}$ is a resolving set of $H$. To see this, let $x,y\in V(H)$. If both $x,y\in V$, then already $W$ resolves them. If both are in $U$, then $v_1$ resolves them. If $x\in V$, and $y\in U$, then $d(x,v_1)<d(x,u_1)$, and $d(y,v_1)>d(y,u_1)$ so the set $\{v_1,u_1\}$ resolves them. This shows $\beta(H)\leq \beta(G)+2$.

Now let $W$ be a smallest resolving set of $H$. If $W\cap V$ is a resolving set of $G$, then $\beta(G)\leq\beta(H)$, and we are done.

Now suppose $W\cap V$ is not a resolving set of $G$. We claim $(W\cap V)\cup\{v_1\}$ is a resolving set of $G$. To see this, let $x,y\in V=V(G)$. Since $W\cap V$ does not resolve them, it means $W$ includes some vertices of $U$. However, $d(x,u_i)=d(x,v_1)+i$, and the same holds for $y$. So if $d(x,u_i)\neq d(y,u_i)$, then $d(x,v_1)\neq d(y,v_1)$, and therefore $x$ and $y$ are resolved by $v_1$. Since $|(W\cap V)\cup\{v_1\}|\leq W$, we conclude $\beta(G)\leq\beta(H)$.
\end{proof}

Now we are ready to prove Corollary~\ref{cor:infiniteunstable}. Let $k$ be a positive integer. By Theorem~\ref{thm:nonbinary} there exists a graph $G$ and $e=uv$, a non-edge of $G$, such that $\beta(G+uv)-\beta(G)>k+2$. Construct $H$ from $G$ as in Lemma~\ref{lem:tail}. By the lemma, and the assumption on $uv$,
\[
\beta(H+uv)-\beta(H)>k.
\]
\end{proof}


\ifjoc
\bibliographystyle{imsart-number}
\bibliography{metric}
%
%
\else
\bibliographystyle{abbrv}
\bibliography{metric}

\begin{thebibliography}{10}

\bibitem{BR-V}
A.~F. Beardon and J.~A. Rodr\'{i}guez-Vel\'{a}zquez.
\newblock On the $k$-metric dimension of metric spaces.
\newblock {\em Ars Math. Contemp.}, 16:25 -- 38, 2019.

\bibitem{Z}
Z.~Beerliova, F.~Eberhard, T.~Erlevah, A.~Hall, M.~Hoffmann, M.~Mihal\'{a}k,
  and L.~S. Ram.
\newblock Network discovery and verification.
\newblock {\em IEEE Journal on selected areas in communications},
  24(12):2168--2181, 2006.

\bibitem{BBDNO}
N.~Benakli, N.~Bong, S.~M. Dueck, L.~Eroh, B.~Novick, and O.~R. Oellerman.
\newblock The threshold strong dimension of a graph.
\newblock {\em Discrete Math.}, 344(7):1 -- 16, 2021.

\bibitem{BMP}
B.~Bollob\'{a}s, D.~Mische, and P.~Pra{\l}at.
\newblock Metric dimension for random graphs.
\newblock {\em The Electronic Jounal of Combinatorics}, 20(4):1 -- 19, 2013.

\bibitem{CHMPP}
J.~C\'{a}ceres, C.~Hernando, M.~Mora, I.~M. Pelayo, and M.~L. Puertas.
\newblock On the metric dimension of infinite graphs.
\newblock {\em Discrete Applied Math.}, 160(18):2618--2626, 2012.

\bibitem{CEJO}
G.~Chartrand, L.~Eroh, M.~A. Johnson, and O.~R. Oellermann.
\newblock Resolvability in graphs and the metric dimension of a graph.
\newblock {\em Discrete Applied Math.}, 105:99 -- 113, 2000.

\bibitem{CPZ}
G.~Chartrand, C.~Poisson, and P.~Zhang.
\newblock Resolvability and the upper dimension of graphs.
\newblock {\em J. Comput. Math. Appl.}, 39:19--28, 2000.

\bibitem{CZ}
G.~Chratrand and P.~Zhang.
\newblock The theory and applications of resolvability in graphs, a survey.
\newblock {\em Congressus Numerantium}, 160:47 -- 68, 2003.

\bibitem{EFKY}
L.~Eroh, P.~Feit, C.~X. Kang, and E.~Yi.
\newblock The effect of vertex or edge deletion on the metric dimension of
  graphs.
\newblock {\em J. Comb.}, 6(4):433--444, 2015.

\bibitem{HM}
F.~Harary and R.~A. Melter.
\newblock On the metric dimension of a graph.
\newblock {\em Ars Combin.}, 2:191--195, 1976.

\bibitem{KRR}
S.~Khuller, B.~Raghavachari, and A.~Rosenfeld.
\newblock Landmarks in graphs.
\newblock {\em Discrete Appl. Math.}, 70:217 -- 229, 1996.

\bibitem{MOT}
S.~Mashkaria, G.~\'Odor, and P.~Thiran.
\newblock On the robustness of the metric dimension of grid graphs to adding a
  single edge.
\newblock arXiv:2010.11023v2, November 2021.

\bibitem{MMO}
L.~Mol, M.~J.~H. Murphy, and O.~R. Oellermann.
\newblock The threshold dimension of a graph.
\newblock {\em Discrete Appl. Math.}, 287:118 -- 133, 2020.

\bibitem{ST}
A.~Seb\H{o} and E.~Tannier.
\newblock On metric generators of graphs.
\newblock {\em Math. Oper. Res.}, 29(2):383 -- 393, 2004.

\bibitem{Slater75}
P.~J. Slater.
\newblock Leaves of trees.
\newblock {\em Congr. Numer.}, 14:549 -- 559, 1975.

\bibitem{TFL}
R.~M. Tillquist, R. C. ad~Frongillo and M.~E. Lladser.
\newblock Truncated metric dimension for finite graphs.
\newblock arXiv:2106.14314v1, 2021.

\end{thebibliography}
\fi

\end{document}